\documentclass[12pt]{amsart}
\usepackage[top=1in, bottom=1in, left=1in, right=1in]{geometry}
\usepackage{dsfont}
\usepackage{amsmath}
\usepackage{amsthm}
\usepackage{amssymb}
\usepackage{multirow}
\usepackage{mathtools}
\usepackage{times}
\usepackage[utf8]{inputenc}
\usepackage[english]{babel}

\usepackage{indentfirst}  
\usepackage{graphicx}
\usepackage{color}
\usepackage[colorlinks=true]{hyperref}
\hypersetup{colorlinks=true, citecolor=green, linkcolor=green, filecolor=magenta, urlcolor=cyan}
\usepackage[shortlabels]{enumitem}
\usepackage{titlesec}
\titleformat{\chapter}[display]{\bfseries\huge}{\filright\huge\chaptertitlename~\thechapter}{3ex}{\titlerule\vspace{1ex}\filright}[\vspace{1ex}\titlerule]

\def\Z{\mathbb{Z}}
\def\Q{\mathbb{Q}}

\newtheorem{theorem}{Theorem}
\newtheorem{lemma}[theorem]{Lemma}
\newtheorem{corollary}[theorem]{Corollary}
\newtheorem{proposition}[theorem]{Proposition}
\newtheorem{remark}[theorem]{Remark}

\newtheorem{definition}[theorem]{Definition}

\titleformat{\section}[block]{\scshape\centering}{\arabic{section}.}{1ex}{}{}
\titleformat{\subsection}[block]{\scshape\bfseries}{\thesubsection}{1ex}{}{}
\titleformat{\subsubsection}[block]{\itshape}{\thesubsubsection}{1ex}{}{}

\def\Z{\mathbb Z}

\def\Q{\mathbb Q}
\def\R{\mathbb R}

\def\cB{\mathcal B}
\def\cA{\mathcal A}

\def\cX{\mathcal X}

\def\cZ{\mathcal Z}

\def\Z{\mathbb Z}

\def\Q{\mathbb Q}
\def\R{\mathbb R}

\def\cB{\mathcal B}
\def\cA{\mathcal A}

\def\Re{\operatorname{Re}}

\def\mod{\operatorname{mod}}

\def\gcd{\operatorname{gcd}}

\def\log{\operatorname{log}}

\def\ord{\operatorname{ord}}

\usepackage[OT2,T1]{fontenc}
\DeclareSymbolFont{cyrletters}{OT2}{wncyr}{m}{n}
\DeclareMathSymbol{\Sha}{\mathalpha}{cyrletters}{"58}

\usepackage{cleveref}

\title{Ranks of Elliptic Curves Twisted by Quadratic Forms}
\author{Mohammad H. Hamdar}
\address{Department of Mathematics and Statistics, Concordia University, 1455 de Maisonneuve
West, Montreal, Quebec, Canada H3G 1M8, and D\'epartement de math\'ematiques et de statistique\\
	Universit\'e de Montr\'eal\\
	CP 6128 succ. Centre-Ville\\
	Montr\'eal, QC H3C 3J7\\
	Canada}
\curraddr{}
\email{mohammadhussein.hamdar@mail.concordia.ca}
\thanks{}

\author{Cihan Sabuncu}
\address{Max Planck Institut für Mathematik\\ 
Vivatsgasse 7 \\
53111 Bonn \\
Germany}
\curraddr{}
\email{sabuncu@mpim-bonn.mpg.de}
\thanks{}

\begin{document}

\maketitle
\begin{abstract}
    Let $E$ be an elliptic curve over $\Q$ and let $E^d$ be its twist by the quadratic character $\chi_d$. We prove there are infinitely many twists $d$ which are sums of two squares such that $E^d$ has rank $1$. This result is achieved using moments of derivatives of modular $L$-functions, and particularly captures the lower derivatives which were left out in the work of Munshi \cite{Munshi1}. Such a result, in particular, also gives us information on the elliptic fibration $(1+t^2)y^2=f(x)$, where $f(x)$ is a cubic polynomial. 
\end{abstract}
\section{Introduction and Main Results}

Let $E$ be an elliptic curve over $\Q$. The Birch and Swinnerton-Dyer (BSD) conjecture dictates that the analytic and algebraic ranks of $E$ are actually the same invariant. Recent years have seen substantial progress towards BSD, and in particular for small ranks. The first such breakthrough was due to Gross--Zagier \cite{grosszagier} and Kolyvagin \cite{Kolyvagin1} which together establish that if $E$ has analytic rank $0$ (respectively $1$), then $E$ has algebraic rank $0$ (respectively $1$) provided we know something about the  twist of $E$ by a quadratic character $\chi_d$, which we denote by $E^d$. In particular, what we want to know is that if there exists some $d$ such that $E^d$ has analytic rank $0$ or $1$, i.e. if the normalized $L$-function $L(s,E\otimes\chi_d)$ doesn't vanish or has a simple pole at $s=1/2$. Such statements were first proven by \cite{waldspurger, MurtyMurty,BumpFrieHoff}. The study of twists of $E$ has thus become one of the central areas in number theory since then, and other reasons include their relations to Fourier coefficients of automorphic forms, see for instance \cite{wald2, kohnenzagier}.

Studying $E^d$ when $d$ runs over twists in thinner families is a harder question. We mention for instance the result of Coates--Li--Tian--Zhai in \cite{Coates}, which proves the existence of infinitely many twists $d$ which are products of exactly $k$ prime factors such that $E^d$ has rank $0$ (or $1$), under certain conditions on $E$.

In this paper, we consider twists which are sums of two squares. Such a family of twists is non-linear and information on ranks of $E^d$ for these $d$'s would give us information about the elliptic surface $(1+t^2)y^2=f(x)$, where $f(x)$ is a cubic polynomial, via the Shioda--Tate formula for instance (cf. \cite{shioda}); in particular information on its Picard number and generic rank. This family was also considered by Munshi in \cite{Munshi1}, but results on ranks of $E^d$ don't follow from his methods. In the case $E$ has Complex Multiplication (CM) however, Munshi \cite{Munshi2} was able to get results about ranks. We will show such results in the general non-CM case, and thus extend the work of Munshi in \cite{Munshi1}. We first start by introducing some definitions and notations.

Let $f \in S_{\kappa}(N) $ be a normalized Hecke eigenform of even weight $\kappa$ and level $N$. Denote by $\lambda_f(n)$ the $n^{th}$ normalized Fourier coefficient of $f$, i.e. $$f(z)=e(z)+\sum_{n\geq 2}\lambda_f(n)n^{\frac{\kappa-1}{2}}e(nz).$$ Its $L$-function is defined by
\[L(s,f)=\sum_{n=1}^{\infty}\frac{\lambda_f(n)}{n^s}=\prod_{p\nmid N}\frac{1}{1-\lambda_f(p)p^{-s}+p^{-2s}}\prod_{p|N}\frac{1}{1-\lambda_f(p)p^{-s}}\]
for $\Re(s)>1$.
Define the completed $L$-function as
\[\Lambda(f,s)=N^{s/2}(2\pi)^{-s}\Gamma\left(s+\frac{\kappa-1}{2}\right)L(s,f).\]
This satisfies the functional equation
\[\Lambda(s,f)=i^{\kappa}\eta\,\Lambda(1-s,f),\]
where $\eta$ is the eigenvalue of the Fricke involution $W_N$ acting on $f$. We denote by $w(f)=i^{\kappa}\eta$ the root number of $f$.
The twisted $L$-function of $f$ by a fundamental discriminant $d$ with $(d,N)=1$ will satisfy
\[L(s,f\otimes\chi_d)=\sum_{n=1}^{\infty}\frac{\lambda_f(n)\chi_d(n)}{n^s}=\prod_{p\nmid dN}\frac{1}{1-\lambda_f(p)\chi_d(p)p^{-s}+p^{-2s}}\prod_{p|N}\frac{1}{1-\lambda_f(p)\chi_d(p)p^{-s}},\]
and if we set 
\[\Lambda(s,f\otimes\chi_d)=(|d|N)^{s/2}(2\pi)^{-s}\Gamma\left(s+\frac{\kappa-1}{2}\right)L(s,f\otimes \chi_d),\]
then we have the functional equation
\[\Lambda(s,f\otimes\chi_d)=i^{\kappa}\eta \chi_d(-N)\Lambda(1-s,f\otimes\chi_d).\]
Note that the level of the twisted modular form $f\otimes \chi_d$ becomes larger and in this case equals $Nd^2$.

Let $d$ be an odd, positive, squarefree (SF) integer. Then $\chi_{8d}(n)$ is a real primitive character with conductor $8d$ and satisfies $\chi_{8d}(-1)=1$. Let $$r(d)=\#\{(a,b)\in \Z^2\setminus (0,0): d=a^2+b^2\}$$ be the representation function for sums of two squares.  We prove the following result.
\begin{theorem}\label{MainTheorem}
Let $f\in S_{\kappa}(\Gamma_0(N))$ be a normalized eigenform, 
and $\psi$ be a smooth function with compact support in $(0,1]$. We have
    \[\sum_{\substack{d \ SF\\(d,2N)=1\\w(f\otimes\chi_{8d})=-1}}r(d)L'(1/2,f\otimes\chi_{8d})\psi\left(\frac{8d}{X}\right)=\mathcal{C}X\log X+O(X(\log X)^{1/2}(\log\log X)^3),\]
    with 
    \[\mathcal{C}=\frac{1}{2^7}\hat{\psi}(0)\prod_{p>2} \left( 1- \frac{\rho(p^2)}{p^4} \right)L(1,\text{sym}^2f)\Big[A(0,N,1)-w(f)A(0,N,N)\Big],
    \]
    where $\hat{\psi}$ is the Fourier transform of $\psi$, the function $\rho$ is defined in \eqref{def_of_rho}, $L(1,\text{sym}^2f)$ is defined in \eqref{sym^2}, and $A(0,N,N')$ is defined in \eqref{A(u)} for $N'\in\{1,N\}$. Note that $\mathcal{C}=0$ if and only if $w(f)=1$ and $N$ is a perfect square.
\end{theorem}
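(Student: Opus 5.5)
We compute the first moment in the standard way: an approximate functional equation for the central derivative, sieving out the squarefree condition, Poisson summation over $d$, and finally treating $r(d)$ as a divisor-type function attached to $\chi_{-4}$.

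\textbf{Approximate functional equation.} When $w(f\otimes\chi_{8d})=-1$, the completed $L$-function is odd about $s=1/2$. Differentiating the functional equation gives
\[
L'(1/2,f\otimes\chi_{8d})=2\sum_{n\ge1}\frac{\lambda_f(n)\chi_{8d}(n)}{\sqrt n}\,V\!\left(\frac{2\pi n}{8d\sqrt N}\right),
\]
with $V(x)=\frac{1}{2\pi i}\int_{(2)}\frac{\Gamma(s+\kappa/2)}{\Gamma(\kappa/2)}x^{-s}\frac{ds}{s^2}$. The double pole at $s=0$ is what eventually produces the factor $\log X$.

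The root-number condition $\chi_{8d}(-N)\,w(f)=-1$ depends only on $d$ modulo $N$. We impose it with the indicator $\frac12\bigl(1-w(f)\chi_{8d}(-N)\bigr)$. After combining $\chi_{8d}(-N)$ with $\chi_{8d}(n)$, the two resulting terms are exactly the source of the combination $A(0,N,1)-w(f)A(0,N,N)$ in $\mathcal C$.

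\textbf{Squarefree sieve and $r(d)$.} We remove the condition that $d$ is squarefree by Möbius inversion, $\mu^2(d)=\sum_{a^2\mid d}\mu(a)$. We split at a parameter $Y$: for $a\le Y$ the sum is treated directly, and for $a>Y$ the tail is bounded.

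The tail $a>Y$ is controlled by Cauchy--Schwarz together with a large-sieve or second-moment bound for quadratic twists (Heath-Brown's large sieve for real characters). Since $r(d)\ll d^\epsilon$ on average, this costs only a power saving in $Y$, and a power of $\log$ suffices for the stated error term. For $d=a^2m$ we write $r(a^2m)$ multiplicatively. Averaging over $a$ then produces the Euler product $\prod_{p>2}\bigl(1-\rho(p^2)/p^4\bigr)$, where $\rho$ records the local densities of sums of two squares.

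\textbf{Main term from Poisson summation.} For fixed $n$ we write $r(d)=4\sum_{e\mid d}\chi_{-4}(e)$ and evaluate
\[
\sum_{d\equiv c\,(\mathrm{mod}\ N)} r(d)\,\chi_{8d}(n)\,\psi(8d/X).
\]
If $r$ is split as $\sum_{e\ell=d}$, then the sum over one variable for fixed $n$ is a character sum to which Poisson summation applies. The zero frequency contributes only when $n$ is a perfect square $n=k^2$. Summing $\lambda_f(k^2)/k$ against $V$ and moving the contour past the double pole at $s=0$ yields $L(1,\text{sym}^2f)$ (up to finitely many local factors at primes dividing $2N$), multiplied by $X\log X$. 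The secondary terms are absorbed into the error.

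The non-zero frequencies give sums of the form $\sum_n \lambda_f(n)\, G_k(n)/\sqrt n$ with Gauss-type sums $G_k(n)$. Following Soundararajan--Young, these are handled by Hecke-eigenform cancellation: the shifted sums involve twisted $L$-values $L(1/2+s,f\otimes\chi_k)$, and these contribute $O(X^{1-\delta})$ or less.

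\textbf{Main obstacle.} The hardest point is the interaction between $r(d)$ and $\chi_{8d}(n)$ when we use $r=1*\chi_{-4}$. The variable $e$ ranges up to $X$, so in the ranges where both $e$ and $d/e$ are of size about $\sqrt X$, Poisson summation in a single variable is too weak.

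My first attempt would use the symmetry $e\leftrightarrow d/e$ (a hyperbola-type split) to reduce to $e\le\sqrt X$ and apply Poisson summation to the longer variable. The off-diagonal terms in which $e$ lies near $\sqrt X$ would be bounded by a large sieve for quadratic characters combined with the bound $\sum_{n\le x}|\lambda_f(n)|^2\ll x$.

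I expect this step to be what loses the factor $(\log X)^{1/2}(\log\log X)^3$ relative to the main term. Concretely, I expect the loss to come from a Cauchy--Schwarz step in which $r(d)^2$ averages to size $\log X$, combined with $\log\log$ losses from the $Y$-split in the squarefree sieve.

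The final claim, that $\mathcal C=0$ exactly when $w(f)=1$ and $N$ is a square, follows by inspecting $A(0,N,N)$. It equals $A(0,N,1)$ precisely when $\chi_{8d}(N)$ is trivially $1$, that is, when $N$ is a square.
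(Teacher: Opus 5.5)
Your proposal has a genuine gap at the non-zero frequencies. It comes from never truncating the approximate functional equation.

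Because $r(d)$ is a degree-two coefficient, summing $r(d)\chi_{8d}(n)$ over $d\asymp X$ by Poisson or Voronoi gives a dual sum as long as the original. In the paper's two-dimensional Voronoi formula (Proposition~\ref{Voronoi}) the dual variable $k_1^2+k_2^2$ runs up to about $n^2/X$. In your $r=4(1*\chi_{-4})$ version, with Poisson in $\ell=d/e$, the frequencies $k\ll en/X$ produce twists by $\chi_{ek}$ with $ek$ up to about $n$. With $n$ up to $X$, you face a family of twists $L(1/2+it,f\otimes\chi_m)$ with $m$ up to about $X$. This is a second-moment-type problem, and nothing supports your claimed bound $O(X^{1-\delta})$. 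Soundararajan--Young need GRH for such terms, and "Hecke eigenform cancellation" is not a mechanism. Unconditionally, the sharp available input is Li's bound (Lemma~\ref{XLi_bound}). With it, the off-diagonal is only of size $U(\log X)^{O(1)}$ when the $n$-sum has length $U$. For $U\asymp X$ this swamps the main term $\mathcal{C}X\log X$.

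Heath-Brown's large sieve is worse: an $X^{\varepsilon}$ loss is fatal when the target error is only about $(\log X)^{1/2}$ below the main term. The same objection applies to your squarefree tail if $Y$ is a power of $\log X$; the paper uses the log-sharp bound of Kumar et al.\ there.

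The missing idea is the split $L'(1/2,f\otimes\chi_{8d})=\mathcal{A}_U+\mathcal{B}_U$ with $U=X/(\log X)^A$.
\begin{itemize}
\item On the short range the paper evaluates the zero frequency. Its double pole gives $\log U=\log X+O(\log\log X)$, hence $\mathcal{C}X\log X$.
\item Still on the short range, the non-zero frequencies are bounded by $UY^3(\log X)^{O(1)}\ll X/(\log X)^{A/100}$. The factor $(\log X)^{-A}$ absorbs all logarithmic losses.
\item The long range, with weight $W(n/8d)-W(n/U)$, is never evaluated. It is bounded by Cauchy--Schwarz against $\sum_{d\le X}r(d)^2\asymp X\log X$, combined with Zhou's mean-square estimate (following Li) for the tail of the approximate functional equation. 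That estimate loses only powers of $\log\log X$, since $[U,X]$ has logarithmic length $A\log\log X$.
\end{itemize}
This is the actual source of the error term $X(\log X)^{1/2}(\log\log X)^3$. It does not come from the $Y$-split in the squarefree sieve, and your hyperbola bookkeeping does not remove the need for it.

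Using $1*\chi_{-4}$ with one-variable Poisson instead of the two-dimensional Voronoi formula is a legitimate reorganization. However, it leads to the same long dual sums, so it needs the same truncation.

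A minor point: for the $w(f)$-term the zero frequency requires $nN$, not $n$, to be a square. This is what makes $A(0,N,N)$ differ from $A(0,N,1)$ at the primes dividing $N$.
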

Such asymptotics for higher derivatives of the completed $L$-function $\Lambda^{(\ell)
}(1/2,f\otimes\chi_{8d})$ were obtained in \cite{Munshi1}, but Theorem \ref{MainTheorem} is one of the first results that deals with lower derivatives in such a family, and with the $L$-function itself rather than with the completed one.

A direct consequence of Theorem \ref{MainTheorem} is the following.

 \begin{corollary}\label{MainCorollary}
    Let $E$ be an elliptic curve over $\Q$ of odd conductor $N$. If $w(E)=1$, then assume also that $N$ is not a perfect square. There are infinitely many fundamental discriminants $d=a^2+b^2$ such that $E^d$ has algebraic rank $1$.
\end{corollary}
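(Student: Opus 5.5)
Apply Theorem \ref{MainTheorem} to the weight-$2$ newform $f\in S_2(\Gamma_0(N))$ attached to $E$ by modularity, and convert the asymptotic for the first moment of $L'$ into the existence of infinitely many twists with analytic rank exactly $1$. Gross--Zagier and Kolyvagin then turn analytic rank one into algebraic rank one.

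First, the hypotheses on $E$ give exactly the non-vanishing of $\mathcal{C}$: the statement of Theorem \ref{MainTheorem} says $\mathcal{C}=0$ if and only if $w(f)=1$ and $N$ is a perfect square, and this case is excluded. Also $\hat\psi(0)\neq 0$ if we take $\psi\ge 0$, not identically zero. The Euler product $\prod_{p>2}(1-\rho(p^2)/p^4)$ converges to a nonzero value, and $L(1,\mathrm{sym}^2 f)\neq 0$. Hence the left-hand side is $\sim \mathcal{C}X\log X$, which tends to infinity as $X\to\infty$.

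Since $\psi$ has compact support in $(0,1]$, the sum at height $X$ only involves $d\le X/8$, and each individual term is finite. So if only finitely many $d$ had $r(d)L'(1/2,f\otimes\chi_{8d})\neq 0$, the sum would be bounded independently of $X$, a contradiction. Therefore there are infinitely many squarefree odd $d$ with $(d,N)=1$, $r(d)\neq 0$ (so $d=a^2+b^2$), root number $w(f\otimes\chi_{8d})=-1$, and $L'(1/2,f\otimes\chi_{8d})\ne 0$.

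For such $d$ the odd functional equation forces $L(1/2,f\otimes\chi_{8d})=0$. Combined with the non-vanishing of the derivative, $E^{8d}$ has analytic rank exactly $1$, so by Gross--Zagier and Kolyvagin it has algebraic rank $1$.

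Next, $8d$ must be converted into a twist by a sum of two squares. The twist $E^{8d}$ is the twist by $\chi_{8d}$, i.e.\ by $\Q(\sqrt{2d})$. Since $2=1^2+1^2$ and the set of sums of two squares is multiplicative, $2d$ is again a sum of two squares whenever $d$ is. The discriminant of $\Q(\sqrt{2d})$ is $8d=4\cdot 2d$, and $E^{8d}\cong E^{2d}$ as curves over $\Q$. Thus $2d=a^2+b^2$ is squarefree and gives a quadratic twist of rank $1$; if one insists on $d$ itself being a fundamental discriminant, the corollary should be read with this normalization. Distinct squarefree $d$ give distinct fields $\Q(\sqrt{2d})$, so we obtain infinitely many non-isomorphic twists.

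The main obstacle is concentrated in Theorem \ref{MainTheorem} itself. The only delicate points here are bookkeeping: the correct identification of $\chi_{8d}$ with the twist by $2d$, and the nonvanishing of the constant. For the latter we rely on the paper's claim that $A(0,N,1)-w(f)A(0,N,N)\neq0$ in the stated cases.
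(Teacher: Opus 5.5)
Your argument is correct and is exactly the paper's implicit proof, which it calls a "direct consequence" of Theorem~\ref{MainTheorem}: apply the theorem to the weight-$2$ newform of $E$, use $\mathcal{C}\neq 0$ and $\hat\psi(0)\neq 0$ to get infinitely many odd squarefree $d$ with $r(d)\neq 0$, $w(f\otimes\chi_{8d})=-1$ and $L'(1/2,f\otimes\chi_{8d})\neq 0$, and conclude with Gross--Zagier--Kolyvagin. Your hedge about normalization is unnecessary, because if $d=a^2+b^2$ then $8d=(2a+2b)^2+(2a-2b)^2$ and $8d$ is itself a fundamental discriminant, so $D=8d$ satisfies the corollary verbatim; your $2d$, though squarefree and a sum of two squares, is not a fundamental discriminant.
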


\begin{remark}
    Theorem \ref{MainTheorem} can be extended to any primitive positive-definite binary quadratic form $Q(x,y)=ax^2+bxy+cy^2$, 
    i.e. to study\footnote{In the former sum, the main term vanishes if the binary quadratic form $Q(u,v)$ is congruent to $1$ and $5 \pmod{8}$ equally often as $(u,v)$ ranges over residue classes modulo $8$.}
    $$ \sum_{\substack{d\equiv 1 \mod 4 \\ d \ SF\\(d,N)=1 \\w(f\otimes\chi_{d})=-1}}r_Q(d)L'(1/2,f\otimes\chi_{d})\psi\left(\frac{d}{X}\right), $$
    and
    $$ \sum_{\substack{ d \ SF\\(d,2N)=1 \\w(f\otimes\chi_{8d})=-1}}r_Q(d)L'(1/2,f\otimes\chi_{8d})\psi\left(\frac{8d}{X}\right), $$
    where
    $$ r_Q(d) := \{ (a,b)\in \Z^2 \setminus (0,0) : d=Q(a,b) \} .$$
    Specifically, for Proposition \ref{Voronoi}, one has a general Voronoi summation formula for $r_Q(d)$ (see Remark \ref{remark_general_quad_forms}). The additional modifications will be in the other results of Section \ref{sec-voronoi}, where one has to be careful incorporating $Q(x,y)$ (and, for the former sum, the case of even entries coming from the $L$-function\footnote{Writing $n=2^{\alpha}n'$ with $n'$ odd, isolates the powers of $2$, which imposes additional restrictions on the Gauss sums. The resulting sum over $\alpha$ will converge, and the argument proceeds similarly as in the odd case treated in the paper.}) into the study of the general two-dimensional Gauss sums defined in Remark \ref{remark_general_quad_forms}.
    
\end{remark}

The first moment of $L'$ but over the full family (i.e. without the $r(d)$ weight) was obtained in many papers, cf. \cite{MurtyMurty,iwaniec}. The first moment of $L$ over the full family instead of $L'$ was established in \cite{iwaniec, radzsound, shen22}. Asymptotics for second moments over the full family of twists can be found under GRH in \cite{soundyoung, Petrow}, and unconditionally in the work of Li \cite{XLi}. Following Li's ideas, unconditional mixed second moments were further done in \cite{Kumar,zhou, huang2025averagingquadraticallytwistedlvalues, chen26}. For analogous moment results over function fields, see \cite{florea}. 

\subsection{Strategy of the Proof}
The proof follows the general strategy as that of Soundararajan and Young \cite{soundyoung}. We begin with the approximate functional equation (Lemma \ref{AFE}),
$$
L'(1/2,f\otimes \chi_d)
\approx
\sum_{n\ge 1}\frac{\lambda_f(n)\chi_d(n)}{n^{1/2}}
W\left(\frac{n}{d}\right).
$$
We then split the sum over $n$ into a short range and a long range. For the short range, we introduce the weight $W(n/U)$, where
$$
U=\frac{X}{(\log X)^A},
$$
with $A>0$ sufficiently large. The long range is then captured by the complementary weight
$$
W\left(\frac{n}{d}\right)-W\left(\frac{n}{U}\right).
$$

For the short range, after temporarily ignoring the squarefree and coprimality conditions, we are led to study sums of the form
$$
\sum_{\substack{d\ge1\\ w(f\otimes\chi_{8d})=-1}}
r(d)
\sum_{n\ge1}
\frac{\lambda_f(n)\chi_{8d}(n)}{n^{1/2}}
h(d,n),
$$
where $h(x,y)$ is a smooth function, compactly supported in the $x$-variable. After interchanging the order of summation, we apply the Voronoi summation formula of $r(d)$ (Proposition \ref{Voronoi}). Since we employ Voronoi summation rather than Poisson summation, the resulting dual sum has expected length squared; consequently, the analysis resembles that of a second moment rather than a first moment.

This reduces the problem to estimating sums of the form
$$
\sum_{(k_1,k_2)\in\mathbb Z^2}
(-1)^{k_1+k_2}
G_{(k_1,k_2)}(n)
\check{h}\left(
\frac{(k_1^2+k_2^2)X}{2n^2},
n
\right),
$$
where
$$
G_{(k_1,k_2)}(n)
=
\sum_{b_1,b_2\bmod n}
\left(\frac{b_1^2+b_2^2}{n}\right)
e\left(
\frac{k_1b_1}{n}
+
\frac{k_2b_2}{n}
\right)
$$
is a two-dimensional Gauss sum, and $\check{h}(\xi,y)$ denotes the Bessel transform of $h(x,y)$ with respect to the $x$-variable (see \eqref{bessel_transform_def}).

The zero frequency $(k_1,k_2)=(0,0)$ produces the main term, following an argument analogous to that of Soundararajan and Young. For the nonzero frequencies, we invoke Mellin inversion to $\check{h}$, expressing it as a double contour integral. After a suitable factorization of $k_1$ and $k_2$, the resulting expression can be recast in terms of sums of central twists $L(1/2+it,f\otimes\chi_d)$. We then employ Li's estimate \cite{XLi} to obtain the required bounds.

Finally, the contribution from the long range is controlled using the work of Zhou \cite[Proposition~4.3]{zhou} via an application of Cauchy--Schwarz.

\begin{remark}
    Concurrently with the completion of this paper, a paper by Huang and Wei \cite{huang2026momentderivativeslfunctionsnonlinear} just appeared where they establish a similar result for the convolutions $1 * \chi_D$, which in particular recovers an analogue of Theorem~\ref{MainTheorem}. Their work does not consider the family subject to the root number condition on the twist; however, with their restriction on the discriminants, they can get an analogue of Corollary \ref{MainCorollary} subject to a global root number condition on the elliptic curve. Our approach was developed independently and differs substantially in its treatment of the frequency side, which constitutes the principal point of departure; the treatment of the error terms in the remainder of the argument is similar in spirit. More precisely, Huang and Wei employ Mellin inversion to study averages of Dirichlet $L$-functions twisted by the Fourier coefficients $\lambda_f(n)$. By contrast, our argument follows the classical Poisson summation philosophy, specifically through a Voronoi summation formula, which leads to the analysis of certain two-dimensional Gauss sums. The Voronoi summation formula established in Proposition~\ref{Voronoi} may be of independent interest, together with the understanding of the properties of these relevant Gauss sums which is the content of Section \ref{sec-voronoi}.
\end{remark}

\textbf{Acknowledgements.} The authors are grateful to Valentin Blomer, Chantal David, Andrew Granville, Xiannan Li, and Maksym Radziwi\l\l{} for their valuable advice and insightful comments. They also thank Stephanie Chan and Oleksiy Klurman for their helpful suggestions. Part of this work was carried out during the \emph{Probability in Number Theory} workshop at the Centre de Recherches Mathématiques. The authors gratefully acknowledge the excellent working conditions and hospitality provided by the Centre de Recherches Mathématiques.

\subsection{Notation} 
We use the usual Vinogradov asymptotic notation. Throughout the paper, $r(d)$ is the representation function for the sum of two squares, and $J_0(x)$ is the usual Bessel function of the first kind. We use $\left( \frac{\cdot}{\cdot} \right)$ for the Kronecker symbol, and $e(x) = e^{2i\pi x}$. We use $\tilde{h}$ for the Mellin transform, $\hat{f}$ for the Fourier transform, and $\check{g}$ for the Bessel transform as defined in \eqref{bessel_transform_def}. Further, throughout the paper, we use $\tau(n)$ for the divisor function. We will usually write $\tau(n)^{O(1)}$ to denote an unspecified fixed power of the divisor function. In certain parts of the paper, we use $\tau_p$ to denote the usual Gauss sum $\mod p$, namely 
$$ \tau_p =\sum_{x \mod p}\left( \frac{x}{p} \right) e\left( \frac{x}{p} \right) .$$

\section{Preliminaries}
In this section, we will collect certain Lemmas which we will find useful throughout.
\begin{lemma}\label{AFE}
    Let $f\in S_{\kappa}(\Gamma_0(N))$ be a normalized eigenform with root number $w(f)=i^{\kappa}\eta$. We have that 
    \begin{align*}
    L'(1/2,f\otimes \chi_d)=\sum_{n\geq 1}\frac{\lambda_f(n)\chi_d(n)}{n^{1/2}}W\left(\frac{n}{|d|}\right)+i^{\kappa}\eta\chi_d(-N)\sum_{n\geq 1}\frac{\lambda_f(n)\chi_d(n)}{n^{1/2}}W\left(\frac{n}{|d|}\right),
    \end{align*}
    where $W(y)$ is a smooth function defined by
    \[W(y)=\frac{1}{2\pi i}\int_{(3)}\frac{\Gamma(u+\kappa/2)}{\Gamma(\kappa/2)}\left(\frac{2\pi y}{\sqrt{N}}\right)^{-u}\,\frac{du}{u^2}.\]
\end{lemma}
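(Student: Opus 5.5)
The plan is the standard contour-shift argument applied to the completed $L$-function with the weight $du/u^2$ in place of $du/u$. The double pole at $u=0$ then extracts the first derivative rather than the value.

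\textbf{Setup.} Write $\varepsilon=i^{\kappa}\eta\chi_d(-N)$ for the root number of $f\otimes\chi_d$. Write
\[G(s)=(|d|N)^{s/2}(2\pi)^{-s}\Gamma\left(s+\tfrac{\kappa-1}{2}\right),\]
so that $\Lambda(s,f\otimes\chi_d)=G(s)L(s,f\otimes\chi_d)$. Consider
\[I=\frac{1}{2\pi i}\int_{(3)}\Lambda(1/2+u,f\otimes\chi_d)\,\frac{du}{u^2}.\]
$\Lambda$ is entire and decays rapidly in vertical strips, because of the Gamma factor and polynomial growth of $L$. So the line of integration can be moved to $\Re(u)=-3$. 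The only pole crossed is the double pole at $u=0$, whose residue is $\Lambda'(1/2,f\otimes\chi_d)$. Hence
\[\Lambda'(1/2)=I-\frac{1}{2\pi i}\int_{(-3)}\Lambda(1/2+u)\frac{du}{u^2}.\]

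\textbf{Applying the functional equation.} On the left line, use $\Lambda(1/2+u)=\varepsilon\Lambda(1/2-u)$ and substitute $u=-v$. The measure $du/u^2$ changes sign under $u=-v$, and the reversed orientation of the line contributes a second sign, so the integral over $(-3)$ equals $\varepsilon I$. This gives
\[\Lambda'(1/2)=(1-\varepsilon)I.\]
In the relevant case $\varepsilon=-1$, which is the only case used later since Theorem \ref{MainTheorem} restricts to $w(f\otimes\chi_{8d})=-1$, this is $2I$. This is how the two identical sums in the statement should be read: the sign convention in front of the second sum must be taken so that the two terms add when the root number is $-1$. I would make this explicit in the write-up.

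\textbf{Passing from $\Lambda'$ to $L'$.} When $\varepsilon=-1$ we have $\Lambda(1/2)=0$, so $L(1/2)=0$ and the $G'(1/2)L(1/2)$ term vanishes. Therefore $L'(1/2)=\Lambda'(1/2)/G(1/2)$.

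Inside $I$, on $\Re(u)=3$ the Dirichlet series converges absolutely, so I expand $L(1/2+u)=\sum_n\lambda_f(n)\chi_d(n)n^{-1/2-u}$ and interchange sum and integral. For each $n$ the resulting factor is
\[\frac{G(1/2+u)}{G(1/2)}n^{-u}=\frac{\Gamma(u+\kappa/2)}{\Gamma(\kappa/2)}\left(\frac{2\pi n}{|d|\sqrt N}\right)^{-u}.\]
So $I/G(1/2)=\sum_n\lambda_f(n)\chi_d(n)n^{-1/2}W(n/|d|)$, with $W$ exactly as defined in the statement.

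\textbf{Main obstacle.} There is no real obstacle. The points needing care are:
\begin{itemize}
\item tracking the signs from the $u\mapsto -u$ substitution, together with the $\varepsilon=-1$ normalization discussed above;
\item justifying the contour shift. This uses Stirling's formula for the Gamma factor and a convexity bound for $L$ in the strip, giving exponential decay in $|\Im u|$.
\end{itemize}
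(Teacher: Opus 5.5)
Your argument is correct and is essentially the paper's proof. The paper only cites the standard contour-shift argument (Iwaniec--Kowalski, Theorem 5.3, run with the weight $du/u^2$ as in Petrow's Lemma 1), and that is exactly what you carry out. Your sign bookkeeping is right: the double pole gives $\Lambda'(1/2)=(1-\varepsilon)I$, so the ``$+$'' in the displayed statement is a typo. The paper itself later uses the factor $1-i^{\kappa}\eta\chi_{8d}(N)$ in $\cA_U$. You are also right that passing from $\Lambda'$ to $L'$ genuinely requires $\varepsilon=-1$. For $\varepsilon=+1$ one has $\Lambda'(1/2)=0$, while $L'(1/2)=-\frac{G'}{G}(1/2)L(1/2)$ need not vanish.

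One slip needs fixing. You copied the normalization $G(s)=(|d|N)^{s/2}(2\pi)^{-s}\Gamma(s+\frac{\kappa-1}{2})$ from the introduction. With that $G$, the functional equation you invoke fails by a factor $|d|^{1/2-s}$. The ratio you compute is also wrong for that $G$: it is actually $\frac{G(1/2+u)}{G(1/2)}n^{-u}=\frac{\Gamma(u+\kappa/2)}{\Gamma(\kappa/2)}\bigl(\frac{2\pi n}{\sqrt{|d|N}}\bigr)^{-u}$, not $\bigl(\frac{2\pi n}{|d|\sqrt N}\bigr)^{-u}$. The conductor of $f\otimes\chi_d$ is $d^2N$, as the paper itself remarks, so you should take $G(s)=(d^2N)^{s/2}(2\pi)^{-s}\Gamma(s+\frac{\kappa-1}{2})$. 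With this choice the functional equation with root number $i^{\kappa}\eta\chi_d(-N)$ holds. Your formula $\bigl(\frac{2\pi n}{|d|\sqrt N}\bigr)^{-u}$, and hence the weight $W(n/|d|)$ in the statement, then follows.
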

\begin{proof}
    This follows from the classical proof for the $L$-function $L(s,f)$ in \cite[Theorem 5.3]{Iwk}. We refer the reader to \cite[Lemma 1]{Petrow} for a complete proof for the derivative $L'(s,f)$.
\end{proof}
 The function $W$ and its derivatives $W^{(j)}$ satisfy the following bounds.
 
\begin{lemma}\label{W_bound}
For any $y\in \R$ and $j\in\Z_{\geq 1}$, we have that
  \[W^{(j)}(y)\ll_{j,B} \frac{1}{y^j}\left(\log\frac{\kappa N}{y}\right)\left(1+\frac{y}{\kappa \sqrt{N}}\right)^{-B}\]
  for any $B>0$.
\end{lemma}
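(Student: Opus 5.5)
Differentiating under the integral sign in the definition of $W$ gives, for $y>0$,
\[W^{(j)}(y)=\frac{1}{2\pi i}\int_{(c)}\frac{\Gamma(u+\kappa/2)}{\Gamma(\kappa/2)}\Big(\frac{2\pi}{\sqrt N}\Big)^{-u}(-u)(-u-1)\cdots(-u-j+1)\,y^{-u-j}\,\frac{du}{u^2}.\]
I will bound this expression separately in two regimes of $y$, moving the contour in each. The polynomial factor $(-u)_j$ already contains a factor $u$, so it cancels one power of $u^2$. The integrand therefore has at most a simple pole at $u=0$; there is no double pole. This cancellation is what produces a logarithm instead of $\log^2$, and no pole at all for the derivative of the constant term. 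Stirling's formula shows that $\Gamma(u+\kappa/2)$ decays exponentially in $|\Im u|$ on vertical lines, which justifies both the differentiation and the contour shifts.

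\textbf{Large $y$, say $y\ge \kappa\sqrt N$.} I shift the contour to $\Re u = B$ for any $B>0$. Then $|y^{-u-j}|=y^{-j}y^{-B}$. The remaining integral is bounded by
\[\frac{1}{\Gamma(\kappa/2)}\int|\Gamma(B+\kappa/2+it)|\,(\sqrt N)^{B}\,|u|^{j-1}\,dt\ll_{j,B}(\kappa\sqrt N)^{B}(2\pi)^{-B},\]
using $\Gamma(B+\kappa/2)/\Gamma(\kappa/2)\ll_B \kappa^B$. This yields $W^{(j)}(y)\ll y^{-j}(y/\kappa\sqrt N)^{-B}$, which matches the stated bound up to the logarithmic factor.

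\textbf{Small $y$, say $y\le \kappa\sqrt N$.} I shift the contour left to $\Re u=-1/2$, passing the simple pole at $u=0$. The residue is $-(j-1)!\,(-1)^{j-1}\cdot(\text{const})\,y^{-j}$, which is $O_j(y^{-j})$. The shifted integral is $\ll y^{-j}(y/\sqrt N)^{1/2}\Gamma(\kappa/2-1/2)/\Gamma(\kappa/2)$, which is $\ll y^{-j}(\kappa\sqrt N/y)^{-1/2}\cdot O(1)\ll y^{-j}$.

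Combining the two regimes gives $W^{(j)}(y)\ll_j y^{-j}$ in this range. This is dominated by $y^{-j}\log(\kappa N/y)$ provided that logarithm is $\gg 1$, which holds since $y\le\kappa\sqrt N$ gives $\kappa N/y\ge\sqrt N$ (for $N=1$ one should read the logarithm as $\log(e\kappa N/y)$ or similar). In the large-$y$ range, I absorb the harmless factor $\log$ by adjusting $B$.

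\textbf{Main obstacle.} The only delicate point is keeping the implied constants uniform in $\kappa$ and $N$. This depends on the Gamma-ratio estimates $\Gamma(\sigma+\kappa/2+it)/\Gamma(\kappa/2)\ll_\sigma \kappa^{\sigma}e^{-c|t|}$, uniformly in $t$, which follow from Stirling's formula. The stated logarithmic factor is in fact wasteful for $j\ge1$; it is inherited from the $j=0$ case, where the double pole gives $W(y)=\log(\sqrt N/2\pi y)+O(1)$. If the paper intends $y$ to range over negative reals as well, I would restrict to $y>0$, since $W$ is only evaluated at $n/|d|>0$.
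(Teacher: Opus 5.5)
Your argument is correct and is essentially the paper's route. The paper simply defers to the contour-shifting proof of Iwaniec--Kowalski, Proposition~5.4: shift right for the decay in $y/\kappa\sqrt N$, and shift left past $u=0$ for small $y$. Your observation that $(-u)(-u-1)\cdots(-u-j+1)$ cancels one power of $u$ even yields the sharper bound $W^{(j)}(y)\ll y^{-j}(1+y/\kappa\sqrt N)^{-B}$ for $j\ge 1$, $y>0$.

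Two side claims need correcting, though neither affects how the lemma is used. First, for $y\ge\kappa N$ the factor $\log(\kappa N/y)$ is $\le 0$, so it cannot be ``absorbed by adjusting $B$''; the statement has to be read with something like $1+|\log(\kappa N/y)|$. Second, the Gamma-ratio bound $\ll_\sigma \kappa^{\sigma}e^{-c|t|}$ is not uniform in $\kappa$: the decay is only like $e^{-t^2/\kappa}$ for $|t|\le\kappa$, and indeed $y^2W''(y)\asymp\sqrt{\kappa}$ near $y=\kappa\sqrt N/4\pi$. So for $j\ge 2$ the implied constant must depend on $\kappa$, which is harmless since $f$ is fixed.
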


\begin{proof}
    The bounds follow in the same way as in the proof of \cite[Proposition 5.4]{Iwk}, taking into consideration the extra $y^{-1}$ factor in the defining integral.
\end{proof}
One of the important consequences of Li's work \cite{XLi} is the following unconditional bound for the second moment of the twisted $L$-function.
\begin{lemma}\label{XLi_bound}
Let $f\in S_{\kappa}(\Gamma_0(N))$. Then we have the bound
    $$\sum_{0<d\leq X} \left|L\left(\frac{1}{2}+it , f \otimes \chi_d \right)\right|^2 \ll_f (1+|t|)^{2}X \log X .$$
\end{lemma}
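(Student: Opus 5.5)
This bound is essentially a restatement of Li's unconditional second-moment estimate \cite{XLi}, so the plan is to reduce the sum to the form treated there and then track the dependence on $t$.

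First, restrict to fundamental discriminants. Each $d\in(0,X]$ is written as $d=d_0m^2$ with $\chi_d$ induced by a primitive character of conductor $d_0'\mid 4d_0$. Then
$$L(1/2+it,f\otimes\chi_d)=L(1/2+it,f\otimes\chi_{d_0'})\prod_{p\mid m,\,p\nmid d_0'}\bigl(1-\lambda_f(p)\chi_{d_0'}(p)p^{-1/2-it}+\chi_{d_0'}(p^2)p^{-1-2it}\bigr).$$
The Euler factor is bounded by $\tau(m)^{O(1)}$ in absolute value, by Deligne's bound. Summing over $m$, the contribution of $m$ is at most $\tau(m)^{O(1)}$ times the primitive second moment up to $X/m^2$, and $\sum_m\tau(m)^{O(1)}m^{-2}$ converges. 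Hence it suffices to prove
$$\sum_{d_0\le Y}\bigl|L(1/2+it,f\otimes\chi_{d_0})\bigr|^2\ll (1+|t|)^2 Y\log Y$$
over fundamental discriminants, uniformly in $Y$. Finitely many residue classes modulo $N$ and modulo $8$ can be handled separately, so we may also fix the root number.

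Second, invoke Li's theorem. Li proves an asymptotic, and in particular an upper bound $\ll Y\log Y$, for $\sum^{\flat}_{d}L(1/2+it,f\otimes\chi_{8d})^2$ and for the mixed moment $L(1/2+it)L(1/2-it)$. His argument is uniform in $t$ with polynomial dependence. To make this concrete, express $|L(1/2+it)|^2=L(1/2+it,f\otimes\chi_d)L(1/2-it,f\otimes\chi_d)$ (using $\overline{L(s)}=L(\bar s)$ for real coefficients) through an approximate functional equation of length $\approx d(1+|t|)$. The analytic conductor is $N^2d^2(1+|t|)^{2}$, so the effective length in $d$-aspect gives $\log Y$ and the $t$-aspect gives polynomial losses. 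Li's large-sieve/Poisson treatment with self-improving bounds then yields $\ll (1+|t|)^{2}Y\log Y$, since each $t$-dependence enters through the length of the approximate functional equation and the gamma-factor ratio. This is bounded by $(1+|t|)^{A}$ via Stirling, and $A=2$ is permissible for the convexity-type contribution.

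The main obstacle is the uniformity in $t$. Li states his result for fixed $t$, or $t$ in a bounded range, so one must check that every error term in his iteration scheme, in particular the off-diagonal terms bounded using the Heath-Brown large sieve after Poisson summation, grows at most like $(1+|t|)^2$. I would first try to bound crudely: replace the gamma-factor weights $V_t$ in the approximate functional equation by Mellin integrals, pull out $|\Gamma(\cdot+it)/\Gamma(\cdot)|$ factors, and note that Li's bounds are uniform for shifts $\alpha,\beta$ with $\Re$ small. Alternatively, one can shift the contour to gain decay in $u$, with the sum over $u$ near $t$ contributing the polynomial factor. Since only a polynomial dependence in $t$ is needed later, after the Mellin inversion in $\check h$ whose transforms decay rapidly, any fixed power in place of $2$ would suffice for the application.
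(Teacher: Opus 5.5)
Your route is the paper's route: the paper's proof is simply a citation of Li's argument for his Theorem~1.1, and your preliminary reduction from arbitrary $d$ to primitive quadratic twists is correct and makes explicit a step the paper leaves implicit (you bound the removed Euler factors at $p\mid m$ by $\tau(m)^{O(1)}$ and sum $\sum_m \tau(m)^{O(1)}m^{-2}$). On the $t$-aspect, the paper reads the $(1+|t|)^2$ dependence directly off Li's proof rather than re-deriving it, so your Stirling/convexity heuristic (which on its own does not pin down the exponent $2$) is not needed; and, as you observe, any fixed power of $1+|t|$ would suffice where the lemma is applied, because of the rapid decay of $\tilde{\tilde{h}}$ in \eqref{DoubleMellinbounds}.
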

\begin{proof}
    This follows from \cite[Proof of Theorem 1.1]{XLi}.
\end{proof}

 \section{Voronoi Summation for Sums of Two Squares and Two-Dimensional Gauss Sums}\label{sec-voronoi}
We first start by proving a Voronoi summation formula that incorporates the representation function $r(d)$.
\begin{proposition}[Voronoi Summation Formula]\label{Voronoi}
Let $F$ be a smooth function with compact support on the positive real numbers, and $n$  an odd integer. Let also $M$ be a positive integer coprime to $2n$. Then
$$ \sum_{\substack{(d,2)=1 \\ d\equiv 0 \mod M}} r(d) \left( \frac{d}{n} \right) F\left( \frac{d}{Z} \right) = \frac{Z}{4M^2n^2} \left( \frac{2}{n} \right) \sum_{(k_1,k_2)\in \Z^2} (-1)^{k_1+k_2} G_{(k_1,k_2)}(M, n) \check{F}\left( \frac{(k_1^2+k_2^2)Z}{2M^2n^2} \right) 
$$
where
    $$ G_{(k_1,k_2)}(M,n) := \sum_{\substack{b_1,b_2 \mod Mn \\ b_1^2+b_2^2\equiv 0 \mod M}} \left( \frac{b_1^2+b_2^2}{n}\right) e\left( \frac{k_1b_1}{Mn} + \frac{k_2 b_2}{Mn} \right), $$
and
\begin{equation}\label{bessel_transform_def}
\check{F} (u) = 2\pi \int_0^{\infty} F(t) J_0(2\pi \sqrt{u t}) dt ,
\end{equation}
is the Bessel transform of $F$.
\end{proposition}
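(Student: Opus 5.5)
The plan is to open up $r(d)$ as a sum over lattice points and apply two-dimensional Poisson summation; the radial symmetry of $F\big((a^2+b^2)/Z\big)$ is what turns the Fourier transform into the Bessel transform $\check F$. Concretely, I would write
$$\sum_{\substack{(d,2)=1\\ M\mid d}} r(d)\Big(\frac dn\Big)F\Big(\frac dZ\Big)=\sum_{\substack{(a,b)\in\Z^2\\ a+b\ \text{odd}}}\mathbf 1_{M\mid a^2+b^2}\Big(\frac{a^2+b^2}{n}\Big)F\Big(\frac{a^2+b^2}{Z}\Big).$$
Here $(0,0)$ is automatically excluded, and $d$ odd is the same as $a+b$ odd. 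The summand depends on $(a,b)$ only through its class modulo $Mn$ and through the parity condition. Since $(2,Mn)=1$, I would fold both conditions into a single lattice. Let $\Lambda_0=\{(a,b): a+b\equiv 0 \bmod 2\}$ be the checkerboard lattice, of covolume $2$. The pairs with $a+b$ odd form the coset $(1,0)+\Lambda_0$, and I would intersect this with the classes $(b_1,b_2)\bmod Mn$ by the Chinese remainder theorem. This gives cosets of $Mn\Lambda_0$, which has covolume $2M^2n^2$.

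Next I would apply Poisson summation on each coset $v+Mn\Lambda_0$:
$$\sum_{x\in v+Mn\Lambda_0} g(x)=\frac{1}{2M^2n^2}\sum_{\xi\in (Mn\Lambda_0)^*}\hat g(\xi)\,e(\xi\cdot v).$$
The dual lattice is $\Lambda_0^*=\Z^2\cup\big(\Z^2+(\tfrac12,\tfrac12)\big)$, i.e. $\{\tfrac12(k_1,k_2): k_1\equiv k_2 \bmod 2\}$. Rotating by $45^\circ$, $(k_1,k_2)\mapsto\big(\tfrac{k_1+k_2}{2},\tfrac{k_1-k_2}{2}\big)$ up to the factor, identifies it with a scaled copy of $\Z^2$ with $|\xi|^2=(k_1^2+k_2^2)/(2M^2n^2)$ in the new coordinates. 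This is where the argument $\frac{(k_1^2+k_2^2)Z}{2M^2n^2}$ comes from. For radial $g(x)=F(|x|^2/Z)$, polar coordinates and the integral $\int_0^{2\pi}e(-r\rho\cos\theta)\,d\theta=2\pi J_0(2\pi r\rho)$ give
$$\hat g(\xi)=\pi Z\int_0^\infty F(t)\,J_0\big(2\pi|\xi|\sqrt{Zt}\big)\,dt=\frac Z2\,\check F(|\xi|^2Z).$$
This produces the constant $\frac{Z}{4M^2n^2}$.

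It remains to identify the arithmetic factor. The phase $e(\xi\cdot v)$ with $v=(1,0)$ coming from the parity coset contributes a sign, which after the coordinate change should be $(-1)^{k_1+k_2}$. The remaining phase, summed over the classes $(b_1,b_2)\bmod Mn$ weighted by $\mathbf 1_{M\mid b_1^2+b_2^2}\big(\frac{b_1^2+b_2^2}{n}\big)$, is a Gauss sum with frequency involving $\bar 2 \bmod Mn$ from the CRT. I would remove the $\bar 2$ by the substitution $b_i\mapsto 2b_i$, which is legitimate since $(2,Mn)=1$. This turns the phase into $e\big(\frac{k_1b_1+k_2b_2}{Mn}\big)$ and multiplies the character value by a power of $\big(\frac 2n\big)$, which I expect to leave exactly the single factor $\big(\frac2n\big)$ together with $G_{(k_1,k_2)}(M,n)$. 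The condition $M\mid b_1^2+b_2^2$ is preserved, because $2$ is invertible modulo $M$.

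Convergence of the dual sum is not an issue, since $\check F(u)$ decays rapidly in $u$ for smooth compactly supported $F$ (integrate by parts using the Bessel differential equation). I expect the main obstacle to be the bookkeeping in the last step rather than any analytic input. One has to choose the coordinates on the dual of the checkerboard lattice so that the dual vectors are labelled precisely by $(k_1,k_2)\in\Z^2$, with the parity twist giving $(-1)^{k_1+k_2}$ and the CRT twist collapsing to $\big(\frac2n\big)$. I would check the normalization against the trivial case $M=n=1$, where the formula must reduce to the classical Poisson identity for $\sum_{a+b\ \text{odd}}F\big((a^2+b^2)/Z\big)$.
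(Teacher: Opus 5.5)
\textbf{Verdict: the proposal has a genuine gap in its final step.} Your analytic setup is correct, and it is a genuinely different, somewhat cleaner route than the paper's.

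\textbf{How the routes differ.} The paper removes the parity condition using $r(2d)=r(d)$. It then applies Poisson over $\Z^2$ separately to the sums with $F(d/Z)$ and $F(2d/Z)$, and recombines them by substituting $k=\big(\frac{k_1'+k_2'}{2},\frac{k_1'-k_2'}{2}\big)$. In that route the sign $(-1)^{k_1'+k_2'}$ appears as $2\cdot\mathbf{1}_{k_1'\equiv k_2'\,(2)}-1$. Your single Poisson summation over cosets of $Mn\Lambda_0$ produces the sign directly. Your dual lattice, Bessel transform, sign and constant $\frac{Z}{4M^2n^2}$ all check out.

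\textbf{Where it fails.} Take $\xi=\frac{1}{2Mn}(k_1+k_2,k_1-k_2)$, the CRT representative $v=(Mn+1)(b_1,b_2)+Mn(1,0)$, and $\bar 2=\frac{Mn+1}{2}$, the inverse of $2$ modulo $Mn$. Then $e(\xi\cdot v)=(-1)^{k_1+k_2}\,e\big(\frac{\bar 2((k_1+k_2)b_1+(k_1-k_2)b_2)}{Mn}\big)$, so the arithmetic factor is $G_{(\bar 2(k_1+k_2),\,\bar 2(k_1-k_2))}(M,n)$.

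Your substitution $b_i\mapsto 2b_i$ turns this into $G_{(k_1+k_2,\,k_1-k_2)}(M,n)$, not $\big(\frac 2n\big)G_{(k_1,k_2)}(M,n)$. The phase becomes $e\big(\frac{(k_1+k_2)b_1+(k_1-k_2)b_2}{Mn}\big)$, not $e\big(\frac{k_1b_1+k_2b_2}{Mn}\big)$. The character only picks up $\big(\frac 4n\big)=1$. No uniform scaling of $(b_1,b_2)$ can produce $\big(\frac 2n\big)$, because it multiplies $b_1^2+b_2^2$ by a square.

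\textbf{The missing idea.} You need exactly the paper's Gauss-sum identity. It comes from the determinant $-2$ change of variables $(b_1,b_2)\mapsto(b_1+b_2,b_1-b_2)$, which is a bijection modulo the odd modulus $Mn$. This is the $45^\circ$ rotation on the $b$-side matching the one you made on the dual side. Applied to $G_{(\bar 2(k_1+k_2),\,\bar 2(k_1-k_2))}(M,n)$, it:
\begin{itemize}
\item sends the phase to $\bar 2\cdot 2(k_1b_1+k_2b_2)\equiv k_1b_1+k_2b_2$;
\item multiplies $b_1^2+b_2^2$ by $2$, giving exactly one factor $\big(\frac 2n\big)$;
\item preserves the condition $M\mid b_1^2+b_2^2$.
\end{itemize}
This yields $G_{(\bar 2(k_1+k_2),\,\bar 2(k_1-k_2))}(M,n)=\big(\frac 2n\big)G_{(k_1,k_2)}(M,n)$ and completes your argument.

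Your proposed sanity check at $M=n=1$ cannot detect this error, since there $G\equiv 1$ and $\big(\frac 21\big)=1$. The case $M=1$, $n=3$, $k=(1,0)$ would catch it: $G_{(1,1)}(3)=-3=\big(\frac 23\big)G_{(1,0)}(3)$.
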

\begin{proof}
    We start by getting rid of the $(d,2)=1$ condition;
    \begin{equation}\label{splitting_odd}
    \sum_{\substack{(d,2)=1 \\ d\equiv 0 \mod M}} r(d) \left( \frac{d}{n} \right) F\left( \frac{d}{Z} \right) = \sum_{ d\equiv 0 \mod M} r(d) \left( \frac{d}{n} \right) F\left( \frac{d}{Z} \right) - \left( \frac{2}{n} \right) \sum_{d\equiv 0 \mod M} r(d) \left( \frac{d}{n} \right) F\left( \frac{2d}{Z} \right),
    \end{equation}
    since $r(2d)=r(d)$. Next, we focus on the sum
    $$ \sum_{d\equiv 0 \mod M} r(d) \left( \frac{d}{n} \right) F\left( \frac{ad}{Z} \right) ,$$
    for $a=1$ or $2$. We take out the real character by imposing congruence conditions on  $a_1$ and $a_2$ where $a_1^2+a_2^2=d$. This gives us, since $(M,2n)=1$, that the latter sum equals
    $$ \sum_{\substack{b_1,b_2 \mod Mn \\ b_1^2+b_2^2\equiv 0\mod M}} \left( \frac{b_1^2+b_2^2}{n}\right) \sum_{\substack{a_1\equiv b_1 \mod Mn \\ a_2\equiv b_2 \mod Mn}} F\left(\frac{(a_1^2+a_2^2) a}{Z} \right). $$
    The inner sum over $a_1,a_2$ by the Voronoi summation formula gives (by writing $a=b_1 + Mn \ell_1$ and $b=b_2 + Mn \ell_2$)
    $$ \sum_{\substack{a_1\equiv b_1 \mod Mn \\ a_2\equiv b_2 \mod Mn}} F\left(\frac{(a_1^2+a_2^2) a}{Z} \right) = \frac{Z}{2aM^2n^2} \sum_{(k_1,k_2)\in \Z^2} \check{F}\left(  \frac{(k_1^2+k_2^2)Z}{a M^2n^2}\right) e\left( \frac{k_1b_1}{Mn} + \frac{k_2 b_2}{Mn} \right) ,$$
    where
    $ \check{F}(u)$ is the Bessel transform of $F$ given in \eqref{bessel_transform_def}. 
    Now summing over $b_1,b_2 \mod Mn$, we get that
    $$ \sum_{d\equiv 0 \mod M} r(d) \left( \frac{d}{n} \right) F\left( \frac{ad}{Z} \right)  = \frac{Z}{2aM^2n^2} \sum_{(k_1,k_2)\in \Z^2} G_{(k_1,k_2)}(M,n) \check{F}\left(  \frac{(k_1^2+k_2^2)Z}{a M^2n^2}\right),  $$
    where $G_{(k_1,k_2)}(M,n)$ is as in the statement of the result. Putting this in \eqref{splitting_odd}, we write $k_1=\frac{k_1'+k_2'}{2}$, and $k_2=\frac{k_1'-k_2'}{2}$ for $k_1'\equiv k_2'\mod 2$, and use the following relation that we prove below $$G_{\left(\frac{k_1'+k_2'}{2},\frac{k_1'-k_2'}{2}\right)}(M, n) = \left(\frac{2}{n}\right) G_{(k_1',k_2')}(M, n),$$ to get the result.

    To prove $G_{\left(\frac{k_1+k_2}{2},\frac{k_1-k_2}{2}\right)}(M,n) = \left(\frac{2}{n}\right) G_{(k_1,k_2)}(M, n)$ for $k_1\equiv k_2 \mod 2$, we note that
$$ \left( \frac{2}{n}\right) G_{(k_1,k_2)}(M, n) = \sum_{\substack{b_1,b_2 \mod Mn \\ b_1^2+b_2^2\equiv 0 \mod M}} \left( \frac{2(b_1^2+b_2^2)}{n}\right)  e\left( \frac{k_1b_1}{Mn} + \frac{k_2 b_2}{Mn} \right) .$$
We can then do a change of variables $b_1 \to b_1+b_2$ and $b_2 \to b_1 - b_2$ which yields 
$$ \left( \frac{2}{n}\right) G_{(k_1,k_2)}(M, n) = \sum_{\substack{b_1,b_2 \mod Mn \\ b_1^2+b_2^2\equiv 0 \mod M}} \left( \frac{b_1^2+b_2^2}{n}\right)  e\left( \frac{\bar{2}(k_1+k_2)b_1}{Mn} + \frac{\bar{2}(k_1-k_2) b_2}{Mn} \right), $$
where $\bar{2}$ denotes the inverse of $2 \mod Mn$. 
Next, by reciprocity we have
$$ \frac{\bar{2}}{Mn} \equiv \frac{1}{2Mn} - \frac{1}{2} \mod 1. $$
Plugging this in, we get
\begin{align*}
\left( \frac{2}{n}\right) G_{(k_1,k_2)}(M, n) &= \sum_{\substack{b_1,b_2 \mod Mn \\ b_1^2+b_2^2\equiv 0 \mod M}} \left( \frac{b_1^2+b_2^2}{n}\right)  e\left( \frac{(k_1+k_2)b_1}{2Mn} + \frac{(k_1-k_2) b_2}{2Mn} \right) \\
&= G_{\left(\frac{k_1+k_2}{2},\frac{k_1-k_2}{2}\right)}(M, n),
\end{align*}
since $k_1\equiv k_2 \mod 2$, and so the term coming from the $-\frac{1}{2}$ factor of the reciprocity law disappears.
\end{proof}

\begin{remark}\label{remark_general_quad_forms}
Following a similar argument for any primitive positive-definite binary quadratic form $Q(x,y)=ax^2+bxy+cy^2$ with discriminant $\Delta$, one obtains a general Voronoi summation formula of the form
    $$  \sum_{\substack{d\equiv h \mod M}} r_Q(d) \left( \frac{d}{n} \right) F\left( \frac{d}{Z} \right) = \frac{Z}{\sqrt{|\Delta|}M^2n^2} \sum_{(k_1,k_2)\in \Z^2} G_{(k_1,k_2)}(M, n, h; Q) \check{F}\left( \frac{4Z\cdot Q^*(k_1,k_2)}{|\Delta|M^2n^2} \right) , $$
    where $Q^*(x,y) = cx^2-bxy+ay^2$ is the dual quadratic form, $\check{F}(x)$ is the Bessel transform defined in \eqref{bessel_transform_def}, and 
    $$ G_{(k_1,k_2)}(M, n, h ; Q) := \sum_{\substack{b_1, b_2 \mod M n\\ Q(b_1,b_2) \equiv h \mod M}} \left( \frac{Q(b_1,b_2)}{n} \right) e\left( \frac{k_1 b_1 + k_2 b_2}{Mn} \right),$$
    are the two-dimensional Gauss sums attached to $Q(x,y)$.
    \end{remark}

The next lemma shows a convenient splitting of the two-dimensional Gauss sums we encountered in Proposition \ref{Voronoi}. In particular, it turns out that one may separate the local conditions appropriately.
\begin{lemma}\label{splitting_G}
Let $k_1,k_2\in \Z$, and $n$ coprime to $M$. Then we have that
$$ G_{(k_1,k_2)}(M,n)= G_{(k_1,k_2)}(n) \cdot \rho_{(k_1,k_2)}(M) ,$$
where 
$$ G_{(k_1,k_2)}(n) := \sum_{b_1,b_2\mod n} \left(\frac{b_1^2+b_2^2}{n}\right) e\left( \frac{k_1b_1}{n} + \frac{k_2 b_2}{n} \right) , $$
and
$$\rho_{(k_1,k_2)}(M) := \sum_{\substack{b_1,b_2 \mod M \\ b_1^2+b_2^2\equiv 0 \mod M}} e \left( \frac{k_1b_1}{M} + \frac{k_2 b_2}{M} \right). $$
\end{lemma}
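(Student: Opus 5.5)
The plan is a direct Chinese Remainder Theorem factorization of the complete sum defining $G_{(k_1,k_2)}(M,n)$. Since $(M,n)=1$, every residue $b \bmod Mn$ can be written uniquely as $b \equiv Mx + ny \pmod{Mn}$ with $x \bmod n$ and $y \bmod M$. I will apply this parametrization to both variables, writing $b_i = Mx_i + ny_i$ for $i=1,2$, and then check that each of the three ingredients of the summand separates into an $x$-part and a $y$-part.

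\emph{The congruence condition.} Modulo $M$ we have $b_i \equiv n y_i$. Hence $b_1^2+b_2^2 \equiv n^2(y_1^2+y_2^2) \pmod M$. Because $n$ is invertible modulo $M$, the condition $b_1^2+b_2^2\equiv 0 \bmod M$ is equivalent to $y_1^2+y_2^2\equiv 0 \bmod M$, and it involves only the $y_i$.

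\emph{The character.} Modulo $n$ we have $b_i \equiv Mx_i$. Since the Jacobi symbol depends only on the residue class modulo $n$,
\[
\left(\frac{b_1^2+b_2^2}{n}\right)=\left(\frac{M^2(x_1^2+x_2^2)}{n}\right)=\left(\frac{M}{n}\right)^2\left(\frac{x_1^2+x_2^2}{n}\right)=\left(\frac{x_1^2+x_2^2}{n}\right),
\]
where the last step uses $(M,n)=1$, so that $\left(\frac{M}{n}\right)=\pm1$.

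\emph{The additive character.} We have
\[
\frac{k_ib_i}{Mn}=\frac{k_ix_i}{n}+\frac{k_iy_i}{M},
\]
so $e\bigl(\tfrac{k_1b_1+k_2b_2}{Mn}\bigr)$ splits as $e\bigl(\tfrac{k_1x_1+k_2x_2}{n}\bigr)\, e\bigl(\tfrac{k_1y_1+k_2y_2}{M}\bigr)$.

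Because the parametrization is a bijection, the sum over $(b_1,b_2)$ becomes a product of a sum over $(x_1,x_2) \bmod n$ and a sum over $(y_1,y_2) \bmod M$ subject to $y_1^2+y_2^2\equiv0 \bmod M$. These are exactly $G_{(k_1,k_2)}(n)$ and $\rho_{(k_1,k_2)}(M)$, which gives the claimed factorization.

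The only point needing care is the choice of parametrization. The more common CRT decomposition, $b\equiv M\bar M u + n\bar n v$, produces twisted frequencies $\bar M k_i$ and $\bar n k_i$. Removing those would then require the substitutions $x\mapsto Mx$ and $y\mapsto ny$ (which preserve the character because $\left(\frac{M^2}{n}\right)=1$, and preserve the congruence condition because $n$ is a unit modulo $M$). The form $b = Mx+ny$ avoids this step entirely, so I do not expect any real obstacle. One should also note that $n$ odd is not needed here; only $(M,n)=1$ is used.
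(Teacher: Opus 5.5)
Your proof is correct and is the same CRT factorization the paper uses: it writes $b_i=b_i'M+b_i''n$ with $b_i'\bmod n$ and $b_i''\bmod M$. You spell out more explicitly why the condition $b_1^2+b_2^2\equiv 0 \bmod M$ passes to the $M$-part and why $\left(\frac{M^2}{n}\right)=1$ removes $M$ from the character.

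One small quibble: your closing remark that oddness of $n$ is not needed is not quite right. Your argument, and indeed the definition of $G_{(k_1,k_2)}(n)$ as a sum over residues mod $n$, relies on the symbol being $n$-periodic. That holds for the Jacobi symbol with $n$ odd, but fails for the Kronecker symbol when $n\equiv 2 \pmod{4}$.
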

\begin{proof}
    We start by noting that if $(b_1,b_2)\in \Z/Mn\Z$ is such that $b_1^2+b_2^2 \equiv 0 \mod M$, then by the Chinese remainder theorem, one can write $b_1=b_1'M+b_1''n$ for $b_1' \mod n$ and $b_1'' \mod M$ since $\gcd(n,M)=1$, and similarly for $b_2$. Using this, we get
    \begin{align*}
        &G_{(k_1,k_2)}(M,n) \\
        &= \sum_{b_1',b_2' \mod n} \sum_{\substack{b_1'',b_2'' \mod M \\ b_1''^2+b_2''^2 \equiv 0 \mod M}} \left( \frac{(b_1'M+ b_1''n)^2 + (b_2'M+b_2''n)^2}{n} \right) e\left( \frac{k_1(b_1'M+b_1''n)}{Mn} + \frac{k_2(b_2'M+b_2''n)}{Mn} \right) \\
        &=\sum_{b_1',b_2' \mod n} \left( \frac{b_1'^2+b_2'^2}{n} \right) e\left( \frac{k_1 b_1'}{n} + \frac{k_2 b_2'}{n}\right) \cdot \sum_{\substack{b_1'',b_2'' \mod M \\ b_1''^2+b_2''^2 \equiv 0 \mod M}} e\left( \frac{k_1 b_1''}{M} + \frac{k_2 b_2''}{M}\right),
    \end{align*}
    which gives us the result.
\end{proof}

Denote by 
\begin{equation}\label{def_of_rho}
\rho(n):=\rho_{(0,0)}(n)=\#\{ (u_1,u_2)\mod n : u_1^2+u_2^2\equiv 0 \mod n \}.    
\end{equation}

We now prove multiplicative properties of our Gauss sums and how they behave locally.
\begin{proposition}\label{G_k(n) conditions}
If $m$ and $n$ are relatively prime odd integers, then $$G_{(k_1,k_2)}(mn)=G_{(k_1,k_2)}(m)G_{(k_1,k_2)}(n).$$ Moreover, if $p^{\alpha}$ is the highest power of $p$  dividing $\gcd(k_1,k_2)$ (setting $\alpha=\infty$ if $(k_1,k_2)=(0,0)$), then we have for $\beta\geq 1$,
$$ G_{(k_1,k_2)}(p^{\beta}) = \begin{cases}
    0 & \text{if } \beta\leq \alpha \text{ is odd} \\
    p^{2(\beta-1)} (p^2 - \rho(p)) & \text{if } \beta\leq \alpha \text{ is even} \\
    p^{2\alpha+1} \left( \frac{(k_1^2+k_2^2)p^{-2\alpha}}{p}\right) & \text{if } \beta = \alpha + 1 \text{ is odd} \\ 
    -p^{2\alpha}\left( \frac{-1}{p} \right)(p\cdot \delta_p((k_1^2+k_2^2)p^{-2\alpha}) - 1)  & \text{if } \beta=\alpha + 1 \text{ is even} \\
    0 & \text{if } \beta\geq \alpha+2,
\end{cases} $$
where $\delta_p(n)$ is $1$ if $n\equiv 0 \mod p$ and $0$ otherwise, and $\rho(n)$ is as in \eqref{def_of_rho}.
\end{proposition}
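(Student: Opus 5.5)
The plan is to prove both parts by direct manipulation of the character sums: multiplicativity via the Chinese remainder theorem, and the local formulas via a lifting argument that reduces everything to sums modulo $p$, which are then evaluated with one–dimensional quadratic Gauss sums.

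\emph{Multiplicativity.} For coprime odd $m,n$, I would write each $b_i \bmod mn$ uniquely as $b_i=b_i'm+b_i''n$ with $b_i'\bmod n$ and $b_i''\bmod m$, exactly as in the proof of Lemma \ref{splitting_G}. Then
\[
\left(\frac{b_1^2+b_2^2}{mn}\right)=\left(\frac{b_1^2+b_2^2}{m}\right)\left(\frac{b_1^2+b_2^2}{n}\right),
\]
and modulo $n$ we have $b_1^2+b_2^2\equiv m^2(b_1'^2+b_2'^2)$. Since $(m^2/n)=1$, the second factor becomes $\left(\frac{b_1'^2+b_2'^2}{n}\right)$, and symmetrically for the factor modulo $m$. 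The exponential splits in the same way:
\[
e\!\left(\frac{k_ib_i}{mn}\right)=e\!\left(\frac{k_ib_i'}{n}\right)e\!\left(\frac{k_ib_i''}{m}\right).
\]
This gives $G_{(k_1,k_2)}(mn)=G_{(k_1,k_2)}(m)\,G_{(k_1,k_2)}(n)$.

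\emph{Reduction to sums modulo $p$.} Set $Q(b)=b_1^2+b_2^2$ and $\chi=(\cdot/p)$, so that $(x/p^\beta)=\chi(x)^\beta$; this depends only on $b\bmod p$. Write $b=c+pd$ with $c\bmod p$ and $d\bmod p^{\beta-1}$. The sum over $d$ is
\[
\sum_{d\bmod p^{\beta-1}} e\!\left(\frac{k\cdot d}{p^{\beta-1}}\right),
\]
which equals $p^{2(\beta-1)}$ if $p^{\beta-1}\mid\gcd(k_1,k_2)$ and $0$ otherwise. This immediately gives $G=0$ for $\beta\ge\alpha+2$. For $\beta\le\alpha$ the remaining phase $e(k\cdot c/p^\beta)$ equals $1$, so
\[
G=p^{2(\beta-1)}\sum_{c\bmod p}\chi(Q(c))^\beta .
\]
When $\beta$ is even this counts the $c$ with $Q(c)\not\equiv0$, giving $p^2-\rho(p)$. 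When $\beta$ is odd the sum is $\sum_c\chi(Q(c))$, which vanishes because the number of solutions of $Q(c)=t$ is the same for every $t\ne0$ (namely $p-(\frac{-1}{p})$), so the Legendre symbol sums to zero.

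\emph{The transition case $\beta=\alpha+1$.} Here $G=p^{2\alpha}S$, where
\[
S=\sum_{c\bmod p}\chi(Q(c))^\beta\, e\!\left(\frac{k'\cdot c}{p}\right),\qquad k'=k/p^\alpha\not\equiv(0,0)\bmod p,
\]
and I set $K=k_1'^2+k_2'^2$. This is the step needing actual computation. For $\beta$ odd, I would expand
\[
\chi(x)=\frac{1}{\tau_p}\sum_t\chi(t)\,e\!\left(\frac{tx}{p}\right).
\]
The inner sum $\sum_c e((tQ(c)+k'\cdot c)/p)$ factors into two completed-square quadratic Gauss sums, equal to $\chi(t)^2\tau_p^2\,e(-\overline{4t}K/p)$. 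The substitution $s=-\overline{4t}K$ then gives $\sum_t\chi(t)e(-\overline{4t}K/p)=\chi(-K)\tau_p$, with the convention $\chi(0)=0$. Using $\tau_p^2=\chi(-1)p$, this yields $S=p\chi(K)$.

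For $\beta$ even, I would write $S=\sum_c e(k'\cdot c/p)-\sum_{Q(c)\equiv0}e(k'\cdot c/p)$. The first sum vanishes since $k'\not\equiv0$. For the second, I detect $Q(c)\equiv0$ by $\frac1p\sum_t e(tQ(c)/p)$. The term $t=0$ contributes $0$, and the terms $t\ne0$ give
\[
\frac1p\sum_{t\ne0}\chi(-1)\,p\, e\!\left(-\frac{\overline{4t}K}{p}\right)=\chi(-1)\bigl(p\,\delta_p(K)-1\bigr).
\]
Hence $S=-\chi(-1)\bigl(p\,\delta_p(K)-1\bigr)$, as claimed.

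I expect the main obstacle to be the sign bookkeeping in this last step: tracking $\chi(-1)$ and $\chi(4)$ correctly through the completion of squares and the substitution $t\mapsto -\overline{4t}K$.
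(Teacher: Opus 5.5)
Your proposal is correct and follows essentially the same route as the paper: the Chinese remainder theorem for multiplicativity, lifting $b=c+pd$ to reduce to sums modulo $p$, and for $\beta=\alpha+1$ an evaluation via completed-square quadratic Gauss sums. For even $\beta$ this uses orthogonality plus a Ramanujan sum, and for odd $\beta$ the expansion $\chi(x)=\tau_p^{-1}\sum_t\chi(t)e(tx/p)$. You also work out the cases $\beta\le\alpha$ and $\beta\ge\alpha+2$, which the paper only dismisses as ``done in a similar manner'': these come from the orthogonality of the $d$-sum and the fact that $Q(c)=t$ has $p-\left(\frac{-1}{p}\right)$ solutions for every $t\ne0$.
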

\begin{proof}
    We first prove the multiplicativity. Since $\gcd(m,n)=1$, write $b_1 = m b_1'+ nb_1'' $ for $b_1' \mod n$ and $b_1'' \mod m$ by the Chinese remainder theorem. And similarly for $b_2=m b_2' + n b_2''$. We then have
    $$G_{(k_1,k_2)}(mn) = \sum_{\substack{b_1', b_2' \mod n \\ b_1'',b_2'' \mod m}}\left(\frac{(b_1' m+ b_1''n)^2+(b_2' m + b_2''n)^2}{mn} \right) e\left( \frac{k_1(b_1' m + b_1'' n) + k_2 (b_2' m + b_2'' n)}{mn} \right). $$
    Using the multiplicativity of the Kronecker symbol and the linearity in the exponential phase, we get that this is $G_{(k_1,k_2)}(m) G_{(k_1,k_2)}(n)$ as desired.

    We now prove the second part of the lemma. If $\beta=\alpha + 1$, then
    \begin{align*}
    \sum_{b_1,b_2 \mod p^{\beta}} \left( \frac{b_1^2+b_2^2}{p^{\beta}} \right) &e\left( \frac{k_1 b_1}{p^{\beta}} + \frac{k_2 b_2}{p^{\beta}}\right) \\
    &= \sum_{\ell_1 , \ell_2 \mod p} \left( \frac{\ell_1^2+\ell_2^2}{p^{\beta}} \right) \sum_{a_1 , a_2 \mod p^{\beta-1}} e\left( \frac{k_1 (a_1p + \ell_1) + k_2 (a_2p + \ell_2)}{p^{\beta}} \right) \\
    &= p^{2(\beta-1)} \sum_{\ell_1 , \ell_2 \mod p} \left( \frac{\ell_1^2+\ell_2^2}{p^{\beta}} \right) e\left( \frac{k_1 \ell_1 + k_2 \ell_2}{p^{\beta}} \right).
    \end{align*}
    If $\beta$ is even, then the sum above is
    $$ \sum_{\substack{\ell_1 , \ell_2 \mod p \\ \ell_1^2+\ell_2^2 \not\equiv 0 \mod p}} e\left( \frac{k_1 \ell_1 + k_2 \ell_2}{p^{\beta}} \right) = - \sum_{\substack{\ell_1 , \ell_2 \mod p \\ \ell_1^2+\ell_2^2 \equiv 0 \mod p}} e\left( \frac{k_1 \ell_1 + k_2 \ell_2}{p^{\beta}} \right) ,$$
    since the full sum is 0. We write $k_1'=k_1/p^{\alpha}$ and $k_2'= k_2/p^{\alpha}$. By orthogonality, we get
    $$ \sum_{\substack{\ell_1 , \ell_2 \mod p \\ \ell_1^2+\ell_2^2 \equiv 0 \mod p}} e\left( \frac{k_1 \ell_1 + k_2 \ell_2}{p^{\beta}} \right) = \frac{1}{p} \sum_{a\mod p}\sum_{\substack{\ell_1 , \ell_2 \mod p}} e\left( \frac{a(\ell_1^2+\ell_2^2) + k_1' \ell_1 + k_2' \ell_2}{p} \right) .$$
    This equals
    $$ \frac{1}{p}\sum_{a\mod p} \left( \sum_{\ell_1 \mod p} e\left( \frac{a\ell_1^2+ k_1'\ell_1}{p} \right) \right) \left(\sum_{\ell_2 \mod p} e\left( \frac{a\ell_2^2+ k_2'\ell_2}{p} \right) \right) . $$
    Note that the term $a=0$ gives $0$. By the theory of Gauss sums, for $a\not=0$, these sums over $\ell_1, \ell_2$ evaluate to
    $$ \left( \frac{a}{p} \right) \tau_p e\left( \frac{-\overline{4a}k_i'^2}{p} \right), $$
    where $\tau_p$ is the usual Gauss sum and so $\tau_p^2 = \left( \frac{-1}{p} \right)p$.
    Hence, we get the fourth case of the lemma, since we are left with
    $$ - p^{2(\beta-1)} \left( \frac{-1}{p} \right) \sum_{\substack{a \mod p \\ a\not= 0}}e\left( \frac{-\overline{4a}(k_1'^2+k_2'^2)}{p} \right) , $$
    and this sum is the Ramanujan sum.

    If $\beta$ is odd, we have
    $$ p^{2(\beta-1)} \sum_{\ell_1 , \ell_2 \mod p} \left( \frac{\ell_1^2+\ell_2^2}{p} \right) e\left( \frac{k_1 \ell_1 + k_2 \ell_2}{p^{\beta}} \right) . $$
    Similar to the even case, we can write the sum as
    $$ \sum_{\ell_1 , \ell_2 \mod p} \left( \frac{\ell_1^2+\ell_2^2}{p} \right) e\left( \frac{k_1' \ell_1 + k_2' \ell_2}{p} \right) .$$
    We rewrite the Legendre symbol as
    $$ \left( \frac{\ell_1^2+\ell_2^2}{p} \right) = \frac{1}{\tau_p} \sum_{\substack{a \mod p \\ a\not=0}} \left( \frac{a}{p}\right) e\left( \frac{a(\ell_1^2+\ell_2^2)}{p} \right) . $$
    Putting this in, similar to the even case, we get sums of the form
    $$ \sum_{\ell_i \mod p} e\left( \frac{a\ell_i^2 + k_i' \ell_i}{p} \right) = e\left( - \frac{\overline{4a}k_1^2}{p} \right) \sum_{\ell_1} e\left( \frac{a(\ell_i + \overline{2a} k_i')^2}{p} \right). $$
    Now changing the variable of summation $\ell_i + k_i' \overline{a} \to \ell_i$, the inner sum evaluates to $\left( \frac{a}{p} \right)\tau_p$ by the classical theory of Gauss sums. Putting everything back and using $\tau_p^2 = \left( \frac{-1}{p} \right)p $ again, yields
    $$ \sum_{\ell_1 , \ell_2 \mod p} \left( \frac{\ell_1^2+\ell_2^2}{p} \right) e\left( \frac{k_1' \ell_1 + k_2' \ell_2}{p} \right) = \frac{\left(\frac{-1}{p} \right) p}{\tau_p} \sum_{\substack{a \mod p \\ a\not=0}} \left( \frac{a}{p} \right) e\left( \frac{-\overline{4a}(k_1'^2+k_2'^2)}{p} \right) .$$
    Lastly, we do the change of variables $-\overline{4a} \to u$ which gives $\left( \frac{a}{p} \right) = \left( \frac{-\overline{4u}}{p} \right) = \left( \frac{-1}{p} \right)\left( \frac{u}{p} \right)$. Hence, we obtain
    $$ \frac{p}{\tau_p} \sum_{\substack{u \mod p \\ u\not=0}} \left(\frac{u}{p} \right) e\left( \frac{u(k_1'^2+k_2'^2)}{p} \right) .$$
    Evaluating this Gauss sum gives the third case of the lemma.

    The other cases are done in a similar manner.
\end{proof}

\begin{definition}
 We say a function $f(x,y)$ over $\Z^2$ is jointly multiplicative if $f(m_1n_1,m_2n_2)=f(m_1,m_2)f(n_1,n_2)$ whenever $\gcd(m_1m_2,n_1n_2)=1$.
 \end{definition}
 The following observation will be found useful.

\begin{lemma}\label{jointlymultiplicative}
    Let $\ell,n$ be odd integers and $(k_1,k_2)\in\Z^2\setminus (0,0)$. Then $G_{(\ell k_1,\ell k_2)}(n)$ is jointly multiplicative in $\ell$ and $n$. 
\end{lemma}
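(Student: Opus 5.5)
We need to show $G_{(m_1n_1 k_1, m_1n_1 k_2)}(m_2n_2) = G_{(m_1k_1,m_1k_2)}(m_2)\,G_{(n_1k_1,n_1k_2)}(n_2)$ whenever $\gcd(m_1m_2,n_1n_2)=1$, with all of $m_1,m_2,n_1,n_2$ odd. The plan is to combine the multiplicativity in $n$ from Proposition \ref{G_k(n) conditions} with the explicit prime-power formulas there. Those formulas show that $G_{(k_1,k_2)}(p^\beta)$ depends on $(k_1,k_2)$ only through two quantities: $\alpha=v_p(\gcd(k_1,k_2))$, and the class of $(k_1^2+k_2^2)p^{-2\alpha}$ modulo $p$ (its Legendre symbol, or whether it is divisible by $p$).

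First, since $m_2$ and $n_2$ are coprime odd integers, Proposition \ref{G_k(n) conditions} gives
\[
G_{(m_1n_1k_1,m_1n_1k_2)}(m_2n_2)=G_{(m_1n_1k_1,m_1n_1k_2)}(m_2)\,G_{(m_1n_1k_1,m_1n_1k_2)}(n_2).
\]
It therefore suffices to prove the following invariance: for an odd $n$ and an odd $\ell$ with $\gcd(\ell,n)=1$,
\[
G_{(\ell k_1',\ell k_2')}(n)=G_{(k_1',k_2')}(n).
\]
Applying this with $n=m_2$, $\ell=n_1$, $k'=m_1k$ removes $n_1$ from the first factor, and the symmetric choice removes $m_1$ from the second factor. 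The coprimality needed here is exactly $\gcd(n_1,m_2)=\gcd(m_1,n_2)=1$, which follows from $\gcd(m_1m_2,n_1n_2)=1$.

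To prove the invariance, I would not use the prime-power table but substitute directly in the defining sum. Since $\ell$ is invertible mod $n$, the change of variables $b_i\to \bar\ell b_i$ permutes the residues mod $n$, so
\[
G_{(\ell k_1,\ell k_2)}(n)=\sum_{b_1,b_2}\left(\frac{\bar\ell^{2}(b_1^2+b_2^2)}{n}\right)e\!\left(\frac{k_1b_1+k_2b_2}{n}\right)=\left(\frac{\ell}{n}\right)^{-2}G_{(k_1,k_2)}(n).
\]
Because $\gcd(\ell,n)=1$, $\left(\frac{\ell}{n}\right)^2=1$, which gives the invariance immediately. As a cross-check, the same conclusion is visible in the formulas of Proposition \ref{G_k(n) conditions}: multiplying by $\ell$ with $p\nmid \ell$ leaves $\alpha$ unchanged and multiplies $(k_1^2+k_2^2)p^{-2\alpha}$ by the square $\ell^2$, which changes neither its Legendre symbol nor whether it is divisible by $p$.

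The one point to check carefully is the bookkeeping of the coprimality hypotheses. Joint multiplicativity only asks for $\gcd(m_1m_2,n_1n_2)=1$, with no condition between $m_1$ and $m_2$, and this is fine: the invariance lemma is only ever applied with the multiplier coprime to the modulus, namely $n_1$ against $m_2$ and $m_1$ against $n_2$. The hypothesis $(k_1,k_2)\neq(0,0)$ plays no role in this argument; it only matters later, when finiteness of $\alpha$ is used. Beyond that, the argument is essentially formal.
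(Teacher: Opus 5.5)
Your proof is correct and is essentially the paper's argument. The paper does the CRT splitting of $b_1,b_2 \bmod m_2n_2$ and the rescaling of each local sum by the relevant unit (in your notation, $n_1$ modulo $m_2$ and $m_1$ modulo $n_2$) in a single computation, while you split the same two steps into the multiplicativity in the modulus from Proposition \ref{G_k(n) conditions} followed by the invariance $G_{(\ell k_1,\ell k_2)}(n)=G_{(k_1,k_2)}(n)$ for $\gcd(\ell,n)=1$, with exactly the coprimality conditions the paper's change of variables needs.
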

\begin{proof}
    Define $f(n, \ell) = G_{(\ell k_1, \ell k_2)}(n)$. We want to show that this is jointly multiplicative. Given $\gcd(n_1\ell_1 , n_2 \ell_2)=1$, note that we have
    $$ f(n_1n_2, \ell_1 \ell_2) = \sum_{b_1, b_2 \mod n_1n_2} \left(\frac{b_1^2+b_2^2}{n_1n_2} \right) e\left( \frac{\ell_1 \ell_2 k_1 b_1 + \ell_1 \ell_2 k_2 b_2}{n_1n_2} \right). $$
    Next, by the Chinese Remainder Theorem, we can write $b_1 = b_1' n_1 + b_1'' n_2$ for $b_1' \mod n_2$ and $b_1'' \mod n_1$ since $\gcd(n_1,n_2)=1$. And likewise for $b_2 \mod n_2$. Hence, similar to the first part of Proposition \ref{G_k(n) conditions}, one gets
    $$ f(n_1n_2, \ell_1 \ell_2) = \sum_{\substack{b_1', b_2' \mod n_2 \\ b_1'' b_2'' \mod n_1}} \left(\frac{b_1'^2+b_2'^2}{n_2} \right) \left(\frac{b_1''^2+b_2''^2}{n_1} \right) e\left( \frac{\ell_1\ell_2 b_1'' + \ell_1\ell_2 b_2''}{n_1} + \frac{\ell_1\ell_2b_1'+ \ell_1\ell_2 b_2'}{n_2} \right) . $$
    We notice that since $\gcd(\ell_1 , n_2)=1$ and $\gcd(\ell_2, n_1)=1$, we can do change of variables $\ell_1 b_1' \to b_1'$, $\ell_1 b_2' \to b_2'$ and $\ell_2 b_1'' \to b_1''$, $\ell_2 b_2'' \to b_2''$. Then using $$\left( \frac{b_1^2+b_2^2}{n} \right) = \left( \frac{(\overline{\ell}b_1')^2+(\overline{\ell}b_2')^2}{n} \right) =\left( \frac{b_1'^2+b_2'^2}{n} \right),  $$ we get that this gives $f(n_1, \ell_1)f(n_2,\ell_2)$.
\end{proof}

\begin{lemma}\label{Properties of rho}
    Let $k_1, k_2\in \Z$. If $m$ and $n$ are relatively prime odd integers, then we have $\rho_{(k_1,k_2)}(mn)= \rho_{(k_1,k_2)}(m)\rho_{(k_1,k_2)}(n)$. Further, for any $\ell$ coprime to $n$, we have $\rho_{(\ell k_1 , \ell k_2)}(n) = \rho_{(k_1,k_2)}(n)$. Moreover, we have the bound $|\rho_{(k_1,k_2)}(n)| \leq \rho(n) \leq \tau(n) n$, where $\rho(n)$ is defined in \eqref{def_of_rho}.
\end{lemma}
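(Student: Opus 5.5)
The three claims are proved separately, each with the same Chinese remainder theorem argument already used in Lemma \ref{splitting_G} and Proposition \ref{G_k(n) conditions}.

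\textbf{Multiplicativity.} Take coprime odd $m,n$ and parametrize $b_i \bmod mn$ as $b_i = m b_i' + n b_i''$, with $b_i' \bmod n$ and $b_i'' \bmod m$. The condition $b_1^2+b_2^2\equiv 0 \bmod mn$ splits into two conditions:
\[
m^2(b_1'^2+b_2'^2)\equiv 0 \bmod n \quad\text{and}\quad n^2(b_1''^2+b_2''^2)\equiv 0 \bmod m .
\]
Since $m^2$ is invertible mod $n$ (and $n^2$ mod $m$), these are equivalent to
\[
b_1'^2+b_2'^2\equiv 0 \bmod n \quad\text{and}\quad b_1''^2+b_2''^2\equiv 0 \bmod m .
\]
The phase factors as
\[
e\!\left(\frac{k_1 b_1' + k_2 b_2'}{n}\right) e\!\left(\frac{k_1 b_1'' + k_2 b_2''}{m}\right).
\]
Hence the sum is $\rho_{(k_1,k_2)}(n)\,\rho_{(k_1,k_2)}(m)$. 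Here I have used the map $b_i'\mapsto m b_i'$ rather than $b_i'$, but the exact choice of CRT coefficients only affects where the units appear, and these are absorbed by the next step.

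\textbf{Invariance under scaling by $\ell$.} Let $(\ell,n)=1$ and substitute $b_i\to \bar\ell b_i$, where $\bar\ell$ is the inverse of $\ell$ mod $n$. This is a bijection of $(\Z/n\Z)^2$. It preserves the condition $b_1^2+b_2^2\equiv 0 \bmod n$, because that condition becomes $\bar\ell^2(b_1^2+b_2^2)\equiv 0$ and $\bar\ell^2$ is a unit. It sends the phase $e(\ell(k_1b_1+k_2b_2)/n)$ to $e((k_1b_1+k_2b_2)/n)$. This gives $\rho_{(\ell k_1,\ell k_2)}(n)=\rho_{(k_1,k_2)}(n)$. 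The same substitution also justifies the unit adjustments in the multiplicativity step.

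\textbf{The bound.} The triangle inequality gives $|\rho_{(k_1,k_2)}(n)|\le \rho(n)$ immediately. For $\rho(n)\le \tau(n)\,n$ I count solutions directly. For each $u_1 \bmod n$, the number of $u_2$ with $u_2^2\equiv -u_1^2 \bmod n$ is at most the number of square roots of a residue mod $n$. That count is not uniformly $O(\tau(n))$ when the residue shares factors with $n$, so I would not use it as the main route.

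Instead I would use multiplicativity, which reduces the bound to prime powers $p^\beta$ with $p$ odd; the inequality $\rho(n)\le\tau(n)n$ is multiplicative in form, so checking $\rho(p^\beta)\le(\beta+1)p^\beta$ suffices. Fix $u_1$ with $v_p(u_1)=j$.

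If $2j<\beta$, we need $u_2=p^j w$ with $w$ a unit satisfying $w^2\equiv -(u_1/p^j)^2 \bmod p^{\beta-2j}$. This has at most $2$ solutions mod $p^{\beta-2j}$, hence at most $2p^{j}$ choices of $u_2 \bmod p^{\beta}$ (because $w$ is determined mod $p^{\beta-j}$ only through its class mod $p^{\beta-2j}$).

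The number of $u_1$ with $v_p(u_1)=j$ is at most $p^{\beta-j}$. So each $j$ contributes at most $2p^\beta$.

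If $2j\ge\beta$, we need $u_2^2\equiv 0 \bmod p^\beta$, so $v_p(u_2)\ge\lceil\beta/2\rceil$. Both $u_1$ and $u_2$ then range over at most $p^{\beta-\lceil\beta/2\rceil}$ values, giving at most $p^{\beta}$ pairs in total.

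Summing over $j<\beta/2$ gives
\[
\rho(p^\beta)\le 2\lceil\beta/2\rceil p^\beta + p^\beta \le (\beta+2)p^\beta .
\]
This is slightly more than the target $(\beta+1)p^\beta$, so it needs refining. One refinement is to separate $p\equiv 3 \bmod 4$, where there are no solutions with $2j<\beta$, so $\rho(p^\beta)\le p^\beta$. The other is to note that for $p\equiv1\bmod4$ the $j=0$ count is exactly $2p^{\beta-1}(p-1)$.

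This bookkeeping at prime powers is the only real obstacle. The remaining claims are formal consequences of the CRT and the unit substitution.
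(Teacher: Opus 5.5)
The multiplicativity and scaling-invariance parts are the same CRT and unit-substitution arguments the paper uses. Your aside about units is unnecessary: with $b_i=mb_i'+nb_i''$ the phase splits as $e\bigl((k_1b_1'+k_2b_2')/n\bigr)\,e\bigl((k_1b_1''+k_2b_2'')/m\bigr)$ exactly.

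For $\rho(n)\le\tau(n)n$ you take a different route from the paper, and your distrust of the naive count is justified. The paper argues that for each $u$ there are at most two $v$ with $u^2+v^2\equiv0\pmod{p^\alpha}$, hence $\rho(p^\alpha)\le 2p^\alpha$. This fails when $p\mid u$. The conclusion is also false for $p\equiv1\pmod4$, $\alpha\ge2$: for instance $\rho(25)=65>50$. In fact $\rho(p^\beta)=(\beta+1)p^\beta-\beta p^{\beta-1}$ for $p\equiv1\pmod4$. So the lemma's inequality is true, but the paper's justification is not, and stratifying by $v_p(u_1)$ as you do is a correct way to see it.

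However, you have not finished the count, and the two refinements you name do not close it by themselves. For odd $\beta$ and $p\equiv1\pmod4$, your crude total $(\beta+2)p^\beta$ overshoots the target by $p^\beta$. Making the $j=0$ count exact saves only $2p^{\beta-1}<p^\beta$. What is missing is the precision you discarded in the tail. The pairs with $v_p(u_1),v_p(u_2)\ge\lceil\beta/2\rceil$ number exactly $p^{2\lfloor\beta/2\rfloor}$, which is $p^{\beta-1}$, not $p^\beta$, when $\beta$ is odd. Combine this with the exact count $p^{\beta-j-1}(p-1)$ of $u_1$ with $v_p(u_1)=j$ at every level, each having exactly $2p^j$ partners. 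This gives, for $p\equiv1\pmod4$ and either parity of $\beta$,
\[
\rho(p^\beta)=\Bigl\lceil\frac{\beta}{2}\Bigr\rceil\cdot 2p^{\beta-1}(p-1)+p^{2\lfloor\beta/2\rfloor}=(\beta+1)p^\beta-\beta p^{\beta-1}\le(\beta+1)p^\beta .
\]
For $p\equiv3\pmod4$ only the tail survives, so $\rho(p^\beta)=p^{2\lfloor\beta/2\rfloor}\le p^\beta$.

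A shorter route for $p\equiv1\pmod4$: choose $i$ with $i^2\equiv-1\pmod{p^\beta}$. The substitution $(x,y)=(u+iv,u-iv)$ is invertible mod $p^\beta$, so $\rho(p^\beta)=\#\{(x,y)\bmod p^\beta: xy\equiv0\}=\sum_{x\bmod p^\beta}\gcd(x,p^\beta)$. Each of the $\beta+1$ possible values $p^j$ of the gcd is attained by at most $p^{\beta-j}$ residues, so the sum is at most $(\beta+1)p^\beta$.
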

\begin{proof}
    The multiplicativity comes from the Chinese Remainder Theorem as in the proof of the previous lemma.
    
    For the second property, write
    $$ \rho_{(\ell k_1 , \ell k_2)}(n) = \sum_{\substack{b_1, b_2 \mod n \\ b_1^2+b_2^2 \equiv 0 \mod n}} e\left( \frac{\ell k_1 b_1+ \ell k_2 b_2}{n} \right).$$
    By a change of variable $\ell b_1 \to b_1$ and $\ell b_2 \to b_2$, we get the same summand without the $\ell$ factors, but with the additional condition that $\ell^2(b_1^2+b_2^2) \equiv 0 \mod n$. Since $\gcd(\ell,n)=1$, this will imply that $\rho_{(\ell k_1 , \ell k_2)}(n) = \rho_{(k_1,k_2)}(n)$.
    
    For the last bounds, note that $|\rho_{(k_1,k_2)}(n)| \leq \rho(n)$ is the trivial bound. For the second bound, by multiplicativity, we just need to study the prime powers. Note that $\rho(p^{\alpha}) = \{(u,v)\in (\Z/p^{\alpha}\Z)^2 : u^2+v^2 \equiv 0 \mod p^{\alpha} \}$. Hence, for each $u\in \Z/p^{\alpha}\Z$, we have at most $2$ choices for $v$, and so $\rho(p^{\alpha}) \leq 2p^{\alpha}$. The result then follows.
\end{proof}

\section{Proof of Theorem \ref{MainTheorem}}

We want to compute
\[\sum_{\substack{d SF\\(d,2N)=1\\w(f\otimes \chi_{8d})=-1}}r(d)L'(1/2,f\otimes\chi_{8d})\psi\left(\frac{8d}{X}\right).\]
This equals
\[\frac{1}{2}\sum_{\substack{d SF\\(d,2N)=1}}r(d)L'(1/2f\otimes\chi_{8d})\psi\left(\frac{8d}{X}\right)-\frac{w(f)}{2}\sum_{\substack{d SF\\(d,2N)=1}}r(d)\chi_{8d}(-N)L'(1/2,f\otimes\chi_{8d})\psi\left(\frac{8d}{X}\right).\]

Opening up $L'(1/2,f\otimes\chi_{8d})$ using Lemma \ref{AFE}, and truncating both sums to $n\leq U=\frac{X}{(\log X)^A}$, for some large $A>0$, we will get
\[L'(1/2,f\otimes\chi_{8d})=\cA_U(f\otimes\chi_{8d})+\cB_U(f\otimes\chi_{8d}),\]
where
\[\cA_U(f\otimes\chi_{8d})=(1-i^{\kappa}\eta \chi_{8d}(N))\sum_{n\geq 1}\frac{\lambda_f(n)\chi_{8d}(n)}{\sqrt{n}}W\left(\frac{n}{U}\right),\]
and
\[\cB_U(f\otimes\chi_{8d})=(1-i^{\kappa}\eta\chi_{8d}(N))\sum_{n\geq 1}\frac{\lambda_f(n)\chi_{8d}(n)}{\sqrt{n}}\left(W\left(\frac{n}{8d}\right)-W\left(\frac{n}{U}\right)\right).\]

\subsection{The $\cA_U$ Sum}
Let $N'=1$ or $N'=N$.
We want to study
\[T(N',h):=\sum_{\substack{d\, SF\\(d,2N)=1}}r(d)\sum_{n\geq 1}\frac{\lambda_f(n)\chi_{8d}(nN')}{\sqrt{n}}h(d,n)\]
where 
\[h(d,n):=W\left(\frac{n}{U}\right)\psi\left(\frac{8d}{X}\right).\]
We detect the SF and coprimality conditions to get
\begin{align*}
T(N',h)&=\sum_{(a_1,2NN')=1}\mu(a_1)\sum_{\substack{a_2|N \\ (a_2,N')=1}} \mu(a_2)\sum_{(d,2)=1}r(da_1^2a_2)\sum_{(n,a_1)= 1}\frac{\lambda_f(n)\chi_{8da_2}(nN')}{\sqrt{n}}h(da_1^2a_2,n)\\
    &=T_1(N',h)+T_{2}(N',h),
\end{align*}
where 
\[T_1(N',h):=\sum_{\substack{(a_1,2NN')=1\\a_1\leq Y}}\mu(a_1)\sum_{\substack{a_2|N \\ (a_2,N')=1}}\mu(a_2)\sum_{(d,2)=1}r(da_1^2a_2)\sum_{(n,a_1)= 1}\frac{\lambda_f(n)\chi_{8da_2}(nN')}{\sqrt{n}}h(da_1^2a_2,n),\]
and
\[T_{2}(N',h):=\sum_{\substack{(a_1,2NN')=1\\a_1> Y}}\mu(a_1)\sum_{\substack{a_2|N \\ (a_2,N')=1}} \mu(a_2)\sum_{(d,2)=1}r(da_1^2a_2)\sum_{(n,a_1)= 1}\frac{\lambda_f(n)\chi_{8da_2}(nN')}{\sqrt{n}}h(da_1^2a_2,n),\]
for $Y=(\log X)^{A/10}$.

The main term will come from the $T_1$ sum, while the $T_2$ sum will be an error.

\subsubsection{Bounding $T_2(N',h)$} 
 We start by adding the coprimality condition $a_2|N$ again, and then writing $d=d_1d_2^2$ for $d_1$ SF and $d_2\in\Z_{>0}$. After exchanging sums and using the fact that $r(d_1d_2^2a_1^2)<\tau(d_1)\tau(d_2^2)\tau(a_1^2)$, we use Cauchy-Schwarz to get
\begin{align*}
    T_2(N',h)\ll \sum_{\substack{(a_1,2NN')=1\\a_1> Y}}\mu(a_1)\tau(a_1^2)\sum_{\substack{(d_2,2N)=1\\d_2\leq \sqrt{X}}}&\tau(d_2^2)\left(\sum_{\substack{(d_1,2N)=1\\d_1 SF\\d_1\leq X/(a_1d_2)^2}} \tau^2(d_1)\right)^{1/2}\\
    &\times \left(\sum_{\substack{(d_1,2N)=1\\d_1 SF\\d_1\leq X/(a_1d_2)^2}}\left\lvert\sum_{(n,d_2)= 1}\frac{\lambda_f(n)\chi_{8d_1}(nN')}{\sqrt{n}}W\left(\frac{n}{U}\right)\right\rvert^{2}\right)^{1/2}.
\end{align*}
Now, employing  \cite[Proposition 3.2]{Kumar} (and in particular the bound in Section 5.5 in their paper) together with the fact that
\[\sum_{n\leq x}\tau^2(n)\asymp x\log^3 x,\]
we get
\[T_{2}(N',h)\ll \frac{X\log^3X}{Y^{1-\epsilon}}.\]

\subsubsection{Analyzing $T_1(N',h)$} We start by changing the order of summations in $T_1(N',h)$. Using $\chi_{8d}(nN')=\left(  \frac{8d}{nN'}\right)$, one has
$$ T_1(N',h)=\sum_{\substack{(a_1,2N)=1\\a_1\leq Y}}\mu(a_1)\sum_{\substack{a_2|N \\ (a_2,N')=1}}\mu(a_2)\sum_{(n,2a_1a_2)= 1}\frac{\lambda_f(n)} {\sqrt{n}}  \sum_{(d,2)=1}r(da_1^2a_2) \left( \frac{8da_1^2a_2}{nN'}\right) h(da_1^2a_2,n) .$$
Next, we use Proposition \ref{Voronoi} to get that the inner sum is
$$  \frac{X}{4a_1^4 a_2^2n^2N'^2} \sum_{(k_1,k_2)\in \Z^2} (-1)^{k_1+k_2} G_{(k_1,k_2)}(a_1^2a_2, nN') \check{h}\left( \frac{(k_1^2+k_2^2)X}{2a_1^4a_2^2n^2N'^2} ,  n \right), $$
where
\[\check{h}\left( \frac{(k_1^2+k_2^2)X}{2a_1^4a_2^2n^2N'^2} ,  n \right)=2\pi \int_0^{\infty}h(Xt,n)J_0\left(2\pi\sqrt{\frac{(k_1^2+k_2^2)Xt}{2a_1^4a_2^2n^2N'^2}}\right)\,dt,\]
is the Bessel transform of $h$.
We can further use Lemma \ref{splitting_G} for splitting the function $G$. Here, the $(k_1,k_2)=(0,0)$ term will give us the main term, and the rest will be an error.\\

\paragraph{\underline{Main Term $(k_1,k_2)=(0,0)$:}}
Denote by $T_{10}(N',h)$ the $(k_1,k_2)=(0,0)$ term. That is,
$$ T_{10}(N',h) = \frac{X}{4N'^2} \sum_{\substack{(a_1,2NN')=1\\a_1\leq Y}}\frac{\mu(a_1)\rho(a_1^2)}{a_1^4}\sum_{\substack{a_2|N \\ (a_2,N')=1 }}\frac{\mu(a_2)\rho(a_2)}{a_2^2}\sum_{(n,2a_1a_2)= 1}\frac{\lambda_f(n) \cdot G(nN')} {n^{5/2}} h_1(n) , $$
where recall
$$ G(nN') = \sum_{b_1,b_2\mod nN'} \left(\frac{b_1^2+b_2^2}{nN'}\right) ,$$
and $$\rho(m)=\#\{ (u_1,u_2)\mod m : u_1^2+u_2^2\equiv 0 \mod m \}$$ are multiplicative functions, and
$$ h_1(n):=\int_0^{\infty} h(Xt,n) dt. $$
Next, we can write this in a cleaner form, since $G(n) \leq n^2$,
\begin{align*}
T_{10}(N',h) = C X \sum_{(n,2)=1}\frac{\lambda_f(n) }{\sqrt{n}}\frac{ G(nN')}{n^2N'^2} \prod_{p|nNN'} \left( 1 - \frac{\rho(p^2)}{p^4}\right)^{-1} \prod_{\substack{p|N \\ p\nmid nN'}} &\left(1  - \frac{\rho(p)}{p^2} \right) \cdot h_1(n) \\
&+ O\left( \frac{X}{Y} \sum_{n=\square}\frac{\tau(n)} {n^{1/2}} |h_1(n)| \right), 
\end{align*}
where
$$C :=\frac{1}{4}  \prod_{p>2} \left( 1- \frac{\rho(p^2)}{p^4} \right). $$
To see how large is the big-O term, we use bounds on $\psi(x)$ and Lemma \ref{W_bound} to get the following bound on the partial derivatives of $h(\cdot,\cdot)$ that follow from integration by parts,
\begin{align}\label{h_bound}
h^{(i,j)}(x,y)\ll_{j,B} \frac{1}{x^iy^j}\left(\log\frac{\kappa N U}{n}\right)\left(1+\frac{n}{\kappa \sqrt{N}U}\right)^{-B}\left(1+\frac{d}{X}
\right)^{-B},\end{align}
  for any $B>0$. Using this in $h_1(n)$, we see that the big-O term is at most $\ll \frac{X}{Y}(\log X)^4$.

Using Proposition \ref{G_k(n) conditions} and noting that $N'\in \{1, N\}$, we get
\begin{align*}
 T_{10}(N',h) = C X \sum_{\substack{(n,2)=1 \\ nN'=\square}}\frac{\lambda_f(n)} {n^{1/2}} \prod_{\substack{p|nNN'}} \left( 1 - \frac{\rho(p^2)}{p^4}\right)^{-1} \prod_{\substack{p|nNN'}} &\left(1  - \frac{\rho(p)}{p^2} \right) h_1(n) \\
 &+ O\left( \frac{X}{(\log X)^{A/100}} \right).
 \end{align*}

We recall that $h(x,y)= W(y/U) \psi(8x/X)$, hence this gives us
\begin{align*}
 T_{10}(N',h)= \frac{C}{16}\hat{\psi}(0) X \sum_{\substack{(n,2)=1 \\ nN'=\square}}\frac{\lambda_f(n)} {n^{1/2}} \prod_{\substack{p|nNN'}}  \left(1  - \frac{\rho(p)}{p^2} \right)&\left( 1 - \frac{\rho(p^2)}{p^4}\right)^{-1}  W \left( \frac{n}{U} \right) \\
 &+ O\left( \frac{X}{(\log X)^{A/100}} \right),
 \end{align*}
where $$\hat{\psi}(0)=16\int_{0}^{\infty}\psi(8t)\,dt$$
is the Fourier transform of $\psi$ at $0$. By the definition of $W(\cdot)$, 
$$ T_{10}(N',h)= \frac{C}{16}  \hat{\psi}(0)X \cdot \frac{1}{2i\pi} \int_{(3)}\frac{\Gamma(u+\kappa/2)}{\Gamma(\kappa/2)}\left(\frac{2\pi}{U\sqrt{N}}\right)^{-u}\frac{\cZ(u)}{u^2} du + O\left( \frac{X}{(\log X)^{A/100}} \right), $$
where
$$ \cZ(u) :=   \sum_{\substack{(n,2)=1 \\ nN'=\square}}\frac{\lambda_f(n)} {n^{1/2+u}} \prod_{\substack{p|nNN'}}  \left(1  - \frac{\rho(p)}{p^2} \right) \left( 1 - \frac{\rho(p^2)}{p^4}\right)^{-1} . $$
 The Dirichlet series $\cZ(u)$ has the following Euler product
\begin{align*}
    \cZ(u) = &\prod_{\substack{p\nmid 2NN' \\ p\equiv 1 \mod 4 }} \left( 1 + \frac{p(p-1)}{p^2-p+1} \left[ \frac{1}{2}\left( 1 - \frac{\lambda_f(p)}{p^{1/2+u}}+ \frac{1}{p^{1+2u}} \right)^{-1} + \frac{1}{2}\left( 1 + \frac{\lambda_f(p)}{p^{1/2+u}}+ \frac{1}{p^{1+2u}} \right)^{-1} - 1\right] \right) \\
    &\prod_{\substack{p\nmid 2NN' \\ p\equiv 3 \mod 4 }} \left( 1 + \frac{p(p+1)}{p^2+p+1}\left[ \frac{1}{2}\left( 1 - \frac{\lambda_f(p)}{p^{1/2+u}}+ \frac{1}{p^{1+2u}} \right)^{-1} + \frac{1}{2}\left( 1 + \frac{\lambda_f(p)}{p^{1/2+u}}+ \frac{1}{p^{1+2u}} \right)^{-1} - 1\right] \right) \\
    &\prod_{\substack{p|NN'}}\frac{p(p^2-\rho(p))}{p^3-\rho(p)} \left[ \frac{1}{2}\left( 1 - \frac{\lambda_f(p)}{p^{1/2+u}} \right)^{-1} + (-1)^{\ord_p(N')}\frac{1}{2}\left( 1 + \frac{\lambda_f(p)}{p^{1/2+u}} \right)^{-1} \right],
\end{align*}
which comes from Hecke recurrences for $\lambda_f(n)$ and the facts that $\rho(p)=(1+\chi_4(p))(p-1)+1$, and $\rho(p^2)=p\cdot \rho(p)$. We can write this in the form
\begin{equation}\label{A(u)}
 \cZ(u)=L(1+2u,\text{sym}^2f) \cdot A(u,N, N'), 
 \end{equation}
where
\begin{align}\label{sym^2}
L(s,\text{sym}^2f)&=\zeta(2s)\sum_{n=1}^{\infty}\frac{\lambda_f(n^2)}{n^s},
\end{align}
is the symmetric square $L$-function for $f$, and
$$A(u,N,N')= \prod_{p\nmid 2NN'} \left( 1+ O\left( \frac{1}{p^{2+2\Re(u)}} + \frac{1}{p^{2+4\Re(u)}}\right) \right) \cdot \prod_{p|2NN'} \left( 1 + O\left( \frac{1}{p^{1+2\Re(u)}} + \frac{1}{p}\right) \right) $$ is an absolutely convergent product for $\Re(u) > -1/4$. Note that $A(u,N, N')$ only depends on $\ord_p(N')$ for $p|NN'$. We have also used that for $\Re(s)>1$,
\[L(s,\text{sym}^2f)=\prod_p\left(1-\frac{\alpha_f(p)^2}{p^s}\right)^{-1}\left(1-\frac{\alpha_f(p)\beta_f(p)}{p^s}\right)^{-1}\left(1-\frac{\beta_f(p)^2}{p^s}\right)^{-1},\]
where $\alpha_f(p)+\beta_f(p)=\lambda_f(p)$ and $\alpha_f(p)\beta_f(p)=1$.

Next, we move the contour in the integral of $T_{10}(N',h)$ to $\Re(u)=-\frac{1}{6}$, and pick up the double pole at $u=0$. This gives us
\begin{align*}
T_{10}(N',h) =  \frac{C}{16} X \cdot \hat{\psi}(0) L(1, \text{sym}^2 f) &A(0,N, N') \left( \log\frac{U\sqrt{N}}{2\pi} + \frac{A'(0,N, N')}{A(0,N, N')} + 2 \frac{L'(1,\text{sym}^2 f)}{L(1,\text{sym}^2 f)} \right) \\
&+ \frac{C}{16} X \cdot \hat{\psi}(0) \frac{1}{2i\pi} \int_{(-1/6)}\Gamma(u+1)\left(\frac{2\pi}{U\sqrt{N}}\right)^{-u} \frac{\cZ(u)}{u^2} du . 
\end{align*}
We use the convexity bound for the $\text{GL}(3)$ object (see \cite[Eqn (5.20)]{Iwk})
$$L(2/3 +it, \text{sym}^2f)  \ll (1+|t|)^{1+\varepsilon},$$ 
to bound the second term. This will give us an error $\ll_{\varepsilon} X^{5/6+\varepsilon}$.

\subsubsection{Off-diagonal terms $(k_1,k_2)\not= (0,0)$}
In this section, we study the following sum
\begin{align*}
    T_{11}(N',h) :=\frac{X}{4}\sum_{(k_1,k_2)\not=(0,0)} (-1)^{k_1+k_2}&\sum_{\substack{(a_1,2N)=1\\a_1\leq Y}}\frac{\mu(a_1) \rho_{(k_1,k_2)}(a_1^2)}{a_1^4}\sum_{\substack{a_2|N}}\frac{\mu(a_2)\rho_{(k_1,k_2)}(a_2)}{a_2^2}\\
    &\times \sum_{(n,2a_1a_2)= 1}\frac{\lambda_f(n)} {\sqrt{n}} \frac{G_{(k_1,k_2)}(nN')}{n^2N'^2} \check{h}\left( \frac{(k_1^2+k_2^2)X}{2a_1^4a_2^2n^2N'^2} ,  n \right),
\end{align*}
where recall
$$ \rho_{(k_1,k_2)}(m)= \sum_{\substack{b_1,b_2 \mod m \\ b_1^2+b_2^2\equiv 0 \mod m}} e \left( \frac{k_1b_1}{m} + \frac{k_2 b_2}{m} \right), $$
is a multiplicative function.
We use Mellin inversion and then a change of variables to write
\begin{align*}
\check{h}(x,y) &= 2\pi \int_{0}^{\infty} h(X t,y) J_0(2\pi\sqrt{tx}) dt \\
&= \frac{1}{iX} \int_0^{\infty} J_0(2\pi \sqrt{xtX^{-1}}) \int_{(-1/4)} \tilde{h}(1+s,y) t^{-s} ds \frac{dt}{t} \\
&=\frac{1}{iX} \int_{0}^{\infty} J_0(t) \int_{(1/2)} \tilde{h}(1-s/2,y) \left( \frac{t}{2\pi \sqrt{xX^{-1}}}\right)^{s} ds \frac{dt}{t} \\
&=\frac{1}{iX} \int_{(1/2)}\tilde{h}(1-s/2,y) \left( \int_{0}^{\infty} J_0(t) t^s \frac{dt}{t} \right) (2\pi \sqrt{xX^{-1}})^{-s} ds \\
&=\frac{X^{-1}}{2i} \int_{(1/2)} \tilde{h}(1-s/2, y) \frac{\Gamma(s/2)}{\Gamma(1-s/2)} (\pi \sqrt{xX^{-1}})^{-s}ds,
\end{align*}
where we used \cite[(17.43.16)]{MR1773820} for the last line. We can further perform one more Mellin inversion in the $y$ variable to get
$$ \check{h}(x,y) = -\frac{X^{-1}}{4\pi} \int_{(1+\epsilon)} \int_{(1/2+\varepsilon)} \tilde{\tilde{h}}(1-s/2,u) y^{-u} (\pi \sqrt{x X^{-1}})^{-s} \frac{\Gamma(s/2)}{\Gamma(1-s/2)} \,du\, ds , $$
where
$$ \tilde{\tilde{h}}(s,u)= \int_{\R^2} h(t,v) t^s v^u \frac{dt}{t} \frac{dv}{v}, $$
and it satisfies using integration by parts several times and the bounds in \eqref{h_bound} on the partial derivatives of $h(\cdot , \cdot )$,
\begin{equation}\label{DoubleMellinbounds}
\tilde{\tilde{h}}(s,u) \ll \frac{X^{\Re(s)} (2U\sqrt{N})^{\Re(u)}}{|u| (1+|s|)^{98} (1+|u|)^{98}} .
\end{equation}

Now, take the $(k_1,k_2)$ sum of  $T_{11}(N',h)$ inside, and split $k_1 = \ell k_1'$ and $k_2=\ell  k_2'$ where $\gcd(\ell,2a_1a_2)=1$, and further define

$$ \cX_{N'}(\alpha, \beta , a_1,a_2,k_1',k_2') := \sum_{(\ell,2a_1a_2)=1} \sum_{(n,2a_1a_2)=1} \frac{\lambda_f(n)}{n^{\alpha} \ell^{\beta}} \cdot \frac{G_{(\ell k_1', \ell k_2')}(nN')}{n^2N'^2}.$$

Since $(\ell , a )=1$ for $a\in \{a_1^2,a_2\}$, we have by Lemma \ref{Properties of rho} that $\rho_{(\ell k_1',\ell k_2')}(a)= \rho_{(k_1',k_2')}(a)$, and using a change of variable, we get 
\begin{align*}
&T_{11}(N',h) = -\frac{1}{16\pi} \sum_{\substack{(a_1,2NN')=1\\a_1\leq Y}}\frac{\mu(a_1)}{a_1^4}\sum_{\substack{a_2|N \\ (a_2,N')=1}}\frac{\mu(a_2)}{a_2^2}  \sum_{\substack{(k_1',k_2')\not=(0,0) \\ p | \gcd(k_1',k_2') \implies p|2a_1a_2}} (-1)^{k_1'+k_2'} \rho_{(k_1',k_2')}(a_1^2) \rho_{(k_1',k_2')}(a_2) \\
&\times \int_{(1/2+\epsilon)} \int_{(1+\varepsilon)} \tilde{\tilde{h}}(1-s/2,u+s) \left( \frac{a_1^2 a_2 N'}{\sqrt{k_1'^2+k_2'^2}} \right)^s \frac{ \Gamma(s/2)}{\pi^{s}\cdot \Gamma(1-s/2)} \cX_{N'}(1/2 + u, s, a_1,a_2,k_1',k_2')\, ds\, du.
\end{align*}

We want to shift the contours in the latter integrals, and for this we need to understand the analytic behaviour of $\cX$. We do this in the following lemma.

First, for an $L$-function $L(s)$ and a positive integer $a$, denote by $L_a(s)$ the function formed from the same Euler product as $L(s)$ but with the primes dividing $a$ removed. Note that for $\Re(s)>0$,
\begin{equation}\label{L_func_ineq}
| L_a(s) | \leq \tau(a) |L(s)|.
\end{equation}

\begin{lemma}\label{EvaluatingZ}
Let $k_1,k_2\in \Z$, and $a_1,a_2$ be positive integers such that $p|\gcd(k_1,k_2) \implies  p|2a_1a_2$. We have that
\begin{align*}
 \cX_{N'}(\alpha,\beta , a_1,a_2,k_1,k_2) = \zeta_{2a_1a_2\gcd(k_1,k_2)NN'}(\beta ) \cdot &L_{2a_1a_2\gcd(k_1,k_2)NN'}\left(1+\alpha,f \otimes \chi_{(k_1,k_2)} \right)\\
 &\times Z_{ N'}(\alpha, \beta, a_1,a_2,k_1,k_2) ,
 \end{align*}
where $\chi_{(k_1,k_2)}$ is a primitive Dirichlet character of conductor $q|k_1^2+k_2^2$, and $Z_{N'}$ is given by some Euler product absolutely convergent in $\Re(\alpha)\geq -1$ and $\Re(\beta)\geq 3/2 + \varepsilon$, and uniformly bounded in terms of $N$ and $N'$.
\end{lemma}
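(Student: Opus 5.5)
The plan is to exploit multiplicativity to write $\cX_{N'}$ as an Euler product, compute the local factor at each prime, and then match those factors against $\zeta(\beta)$ and $L(1+\alpha,f\otimes\chi)$.

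\textbf{Euler product.} Put $k=(k_1,k_2)$ and $g=\gcd(k_1,k_2)$. By Lemma \ref{jointlymultiplicative}, the function $(n,\ell)\mapsto G_{(\ell k_1,\ell k_2)}(nN')$ is jointly multiplicative once the fixed factor $N'$ is absorbed into the finitely many primes dividing $NN'$. Since $\lambda_f$ is multiplicative, so is the full summand
\[
\frac{\lambda_f(n)}{n^{\alpha}\ell^{\beta}}\cdot\frac{G_{(\ell k_1,\ell k_2)}(nN')}{(nN')^2}
\]
as a function of the pair $(n,\ell)$. Hence $\cX_{N'}=\prod_p \cX_p$, where
\[
\cX_p=\sum_{j,b\geq 0}\frac{\lambda_f(p^b)}{p^{b\alpha+j\beta}}\cdot\frac{G_{(p^jk_1,p^jk_2)}(p^{b+\nu})}{p^{2(b+\nu)}},
\qquad \nu=\ord_p(N').
\]
Primes $p\mid 2a_1a_2$ contribute the factor $1$, because both $n$ and $\ell$ are coprime to $2a_1a_2$. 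The primes dividing $gNN'$ are finitely many, each of size $\ll_N 1$ or dividing $2a_1a_2$ by hypothesis. Their factors go into $Z_{N'}$ directly, after being bounded trivially using Proposition \ref{G_k(n) conditions}.

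\textbf{Local factors at generic primes.} Now take $p\nmid 2a_1a_2gNN'$. Then $p$ does not divide both $k_1,k_2$, so the exponent of $p$ in $\gcd(p^jk_1,p^jk_2)$ is exactly $j$. Proposition \ref{G_k(n) conditions} with $\alpha=j$ kills every term with $b\geq j+2$, and every term with $b\leq j$ odd. The surviving terms are as follows.
\begin{itemize}
\item $(j,b)=(0,0)$ gives $1$.
\item $(j,b)=(0,1)$ gives $\lambda_f(p)\left(\frac{k_1^2+k_2^2}{p}\right)p^{-1-\alpha}$.
\item $(j,b)=(1,0)$ gives $p^{-\beta}$.
\item The remaining terms, with $(j,b)\in\{(0,2),(1,2),(2,2),\dots\}$, are each of size $p^{-2-2\Re\alpha}$, $p^{-\Re\beta-1-2\Re\alpha}$, or smaller.
\end{itemize}
The Jacobi symbol $n\mapsto\left(\frac{k_1^2+k_2^2}{n}\right)$ on odd $n$ agrees, away from primes dividing $2(k_1^2+k_2^2)$, with a primitive character $\chi_{(k_1,k_2)}$ whose conductor $q$ divides $k_1^2+k_2^2$ (up to a power of $2$ dividing $8$). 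The primes with $p\mid k_1^2+k_2^2$ but $p\nmid g$ have vanishing $(0,1)$ term, which is consistent with $\chi_{(k_1,k_2)}(p)=0$ or with the finitely many non-primitive places.

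\textbf{Matching and what remains.} I then divide $\cX_p$ by
\[
(1-p^{-\beta})^{-1}\bigl(1-\lambda_f(p)\chi(p)p^{-1-\alpha}+\chi(p)^2p^{-2-2\alpha}\bigr)^{-1},
\]
the local factors of $\zeta(\beta)$ and $L(1+\alpha,f\otimes\chi_{(k_1,k_2)})$. The linear terms in $p^{-1-\alpha}$ and $p^{-\beta}$ cancel, leaving $Z_p=1+E_p$, where $E_p$ consists of cross terms and the higher $(j,b)$ terms above. Using $|\lambda_f(p^b)|\leq b+1$ (Deligne), $\rho(p)\leq 2p$, and the explicit even-case values from Proposition \ref{G_k(n) conditions}, I would bound $E_p$ by $O(p^{-1-\delta})$ uniformly in the stated region.

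\textbf{Main obstacle.} The delicate step is exactly this bookkeeping: checking that after division every leftover term has exponent strictly below $-1$. The diagonal contributions $b=j$ even, of size $p^{-2j\Re\alpha-j\Re\beta}$ with the coefficient $(1-\rho(p)/p^2)$, and the $(0,2)$ term $\left(\frac{-1}{p}\right)p^{-2-2\alpha}$, are the borderline ones. The $(0,2)$ term forces one to track the $\chi^2$ and $\mathrm{sym}^2$-type contributions carefully. If the region $\Re\alpha\geq-1$ turns out to be too generous for these terms, I would instead extract one further factor such as $\zeta(2+2\alpha)^{\pm1}$ (or a $\chi_{-4}$-twisted version), which is harmless for the subsequent contour shifts. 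Finally, uniformity in $N,N'$ follows because only the finitely many primes dividing $NN'$ are treated separately. For the primes dividing $g$, the passage from $L$ to $L_{2a_1a_2gNN'}$ costs at most the factor $\tau(\cdot)$ recorded in \eqref{L_func_ineq}.
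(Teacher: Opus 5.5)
Your route is the same as the paper's: joint multiplicativity, local evaluation via Proposition~\ref{G_k(n) conditions}, then dividing out the local factors of $\zeta(\beta)$ and $L(1+\alpha,f\otimes\chi_{(k_1,k_2)})$. The gap is the step where you assert $Z_p=1+E_p$ with $E_p\ll p^{-1-\delta}$ on all of $\Re\alpha\ge -1$, $\Re\beta\ge 3/2+\varepsilon$. That claim is false, and the obstruction is not where you look for it.

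The $(j,b)=(0,2)$ term in your list vanishes by your own rule ($b\ge j+2$). The $\left(\frac{-1}{p}\right)$-terms occur only at $(j,b)=(2r-1,2r)$ and carry $p^{-(2r-1)\beta}$, so they are harmless. Hence for generic $p$ the $j=0$ part is exactly $1+\lambda_f(p)\chi(p)x$, with $x=p^{-1-\alpha}$ and $\chi=\chi_{(k_1,k_2)}$. The damage comes from dividing by $L_p$: using $\lambda_f(p)^2=\lambda_f(p^2)+1$,
\[
\bigl(1+\lambda_f(p)\chi(p)x\bigr)\bigl(1-\lambda_f(p)\chi(p)x+\chi(p)^2x^2\bigr)=1-\lambda_f(p^2)\chi(p)^2x^2+\lambda_f(p)\chi(p)^3x^3 .
\]
These two terms, of size $p^{-2-2\Re\alpha}$ and $p^{-3-3\Re\alpha}$, survive in $Z_p$; the $\ell\ge 1$ contributions only add $O(p^{-1-2\varepsilon})$. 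Their sum over $p$ converges absolutely only for $\Re\alpha>-1/2$, and at $\Re\alpha=-1$ they have size $\asymp 1$. As a check: for $\Re\beta$ large, $\cX_{N'}$ is essentially $\sum_n\mu^2(n)\lambda_f(n)\chi(n)n^{-1-\alpha}$, whose ratio to $L(1+\alpha,f\otimes\chi)$ is not an absolutely convergent Euler product once $\Re\alpha\le -1/2$. Your fallback of extracting $\zeta(2+2\alpha)^{\pm1}$ or a $\chi_{-4}$-twist cannot cure this, because the coefficient of $x^2$ is $-\lambda_f(p^2)$, not $\pm1$ or $\left(\frac{-1}{p}\right)$.

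The correct bookkeeping also pulls out $L(2+2\alpha,\mathrm{sym}^2 f)^{-1}$; note $\chi^2$ is principal off the conductor. The leftover Euler product is then absolutely convergent for $\Re\alpha>-2/3$, $\Re\beta\ge 3/2+\varepsilon$. The paper's proof makes exactly the same leap: it just says the $\zeta$ and $L$ factors can be extracted from the first three terms. So the region $\Re\alpha\ge -1$ in the statement is itself overstated; nothing of the stated shape reaches it.

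What the later argument actually uses is the line $\Re\alpha=-1/2$ (from $\alpha=1/2+u$ with $\Re u=-1$), and there the corrected factorization suffices. The factor $L(1+2i\operatorname{Im}\alpha,\mathrm{sym}^2 f)^{-1}$ is finite because $L(s,\mathrm{sym}^2 f)$ has no zeros on $\Re s=1$. It grows at most polylogarithmically in $|\operatorname{Im}\alpha|$, which the rapid decay of $\tilde{\tilde h}$ absorbs.

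A smaller point concerns the finitely many $p\mid k_1^2+k_2^2$ with $p\nmid\gcd(k_1,k_2)$. There the Jacobi symbol vanishes while $\chi(p)$ need not, and the $\delta_p$-case of Proposition~\ref{G_k(n) conditions} applies. So those local ratios are $1+O(p^{-1/2})$ on $\Re\alpha=-1/2$, not $1+O(p^{-1-\delta})$, and their product is $\ll\tau(k_1^2+k_2^2)^{O(1)}$ rather than uniformly bounded. This is harmless downstream, since the paper carries such a factor, but your ``uniform in $N,N'$'' claim needs that weakening.
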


\begin{proof}
   From Lemma \ref{jointlymultiplicative}, we know that $G_{(\ell k_1,\ell k_2)}(n)$ is jointly multiplicative in $n$ and $\ell$, hence we can study the Euler product
    $$ \left( \prod_{\substack{p| N' \\ p\nmid 2a_1a_2}} \sum_{n, \ell \geq 0} \frac{\lambda_f({p^n})}{p^{\alpha n+\beta \ell}} \frac{G_{(p^{\ell} k_1, p^{\ell}k_2)}(p^{n+\nu_p(N')})}{p^{2n+2\nu_p(N')}} \right) \cdot \left( \prod_{\substack{p \nmid N' \\ p\nmid 2a_1a_2}} \sum_{n, \ell \geq 0} \frac{\lambda_f({p^n})}{p^{\alpha n+\beta \ell}} \frac{G_{(p^{\ell} k_1, p^{\ell}k_2)}(p^n)}{p^{2n}} \right) . $$
    For now, we focus on $p\nmid N'$. For the generic prime $p\nmid 2a_1a_2\gcd(k_1,k_2)$, we have for $\ell=0$ by Proposition \ref{G_k(n) conditions} that the terms that survive are
    $$ 1+\frac{\lambda_f(p)\left(  \frac{k_1^2+k_2^2}{p}\right)}{p^{\alpha+1}}.$$
    Since $p$ is odd, we can also write $\left(\frac{k_1^2+k_2^2}{p}\right) = \chi_{(k_1, k_2)}(p) 1_{(k_1^2+k_2^2,p)=1} ,$ where $ \chi_{(k_1,k_2)}(p)$ is some primitive character of conductor $q|k_1^2+k_2^2$. For $\ell=1$, the only terms that appear are $n=0$ and $n=2$, which give a contribution of 
    \[\frac{1}{p^{\beta}}+\frac{\lambda_f(p^2)\left(\frac{-1}{p}\right)}{p^{2\alpha+\beta+2}}.\]
    While for $\ell= 2$, one gets
\[\frac{1}{p^{2\beta}}+\frac{\lambda_f(p^2)(p^2-\rho(p))}{p^{2\alpha+2\beta+2}}+\frac{\lambda_f(p^3)\left(\frac{k_1^2+k_2^2}{p}\right)}{p^{3\alpha+2\beta+1}}.\]
We start detecting a pattern for even and odd $\ell$'s, and so we group all the terms in a way to get that the full contribution from all $\ell$'s and $n$'s in the case $p\nmid 2a_1a_2\gcd(k_1,k_2)NN'$ is
\[1+\sum_{\ell=1}^{\infty}\frac{1}{p^{\ell\beta}}+\sum_{r=1}^{\infty}\lambda_f(p^{2r})\Bigg[\frac{\left(\frac{-1}{p}\right)}{p^{2r\alpha+(2r-1)\beta+2}}+\frac{p^2-\rho(p)}{p^{2r\alpha+2}}\sum_{\ell=2r}^{\infty}\frac{1}{p^{\ell\beta}}\Bigg]+\left(\frac{k_1^2+k_2^2}{p}\right)\sum_{r=0}^{\infty}\frac{\lambda_f(p^{2r+1})}{p^{(2r+1)\alpha+2r\beta+1}}.\]
   For $\Re(\alpha) \geq -1$ and $\Re(\beta)\geq 3/2+\varepsilon$, this is
    \begin{align*}
    &1+\sum_{\ell\geq 1}\frac{1}{p^{ \beta\ell}} +\frac{\lambda_f(p)\left(  \frac{k_1^2+k_2^2}{p}\right)}{p^{\alpha+1}}+ O\left( \frac{1}{p^{2\Re(\alpha)+\Re(\beta)+2}} + \frac{1}{p^{2\Re(\alpha) + 2\Re(\beta)}}\right) \\
    &=1+ \sum_{\ell\geq 1}\frac{1}{p^{\beta \ell}} +\frac{\lambda_f(p)\left(  \frac{k_1^2+k_2^2}{p}\right)}{p^{\alpha+1}}+ O( p^{-(1+2\varepsilon)}).
    \end{align*} 
   Now we can extract the  $\zeta_{2a_1a_2\gcd(k_1,k_2)NN'}(\beta )$ and $L_{2a_1a_2\gcd(k_1,k_2)NN'}(1+\alpha,f\otimes\chi_{(k_1,k_2)})$ factors appearing in the statement of the lemma, from the first three terms of the previous line.
    
    In the case $p|\gcd(k_1,k_2)$, we don't get a contribution since $p\nmid 2a_1a_2$ by definition of $\cX$, and $p|\gcd(k_1,k_2) \implies p|2a_1a_2$.

    In the case $p| NN'$, we get an Euler product of the form
    $$ \prod_{p| NN'} \left( 1 + O\left( p^{O(1)} \right) \right) \ll_{N, N'} 1 , $$
    which we put in the $Z_{N'}$ factor.

\end{proof}

In view of Lemma \ref{EvaluatingZ}, we first split the sum over $(k_1',k_2')$ into two parts $k_1'^2+k_2'^2 \leq W$ and $k_1'^2+k_2'^2 > W$ where $W=U^2 X^{-1} Y^4$. For the former range, we shift the contour in $s$ to $\Re(s)=\frac{7}{4}$. For the latter range, we shift the contour in $s$ to $\Re(s)=\frac{5}{2}$. In both cases, we shift the contour in $u$ to $\Re(u)=-1$.

For the range $k_1'^2+k_2'^2 \leq W$, we get
\begin{gather*}
 -\frac{N'}{16\pi} \sum_{\substack{(a_1,2NN')=1\\a_1\leq Y}}\frac{\mu(a_1)}{a_1^4}\sum_{\substack{a_2|N \\ (a_2,N')=1}}\frac{\mu(a_2)}{a_2^2}  \sum_{\substack{(k_1',k_2')\not=(0,0) \\ p | \gcd(k_1' ,k_2') \implies p|2a_1a_2 \\ k_1^2+k_2^2 \leq W}}(-1)^{k_1'+k_2'}\rho_{(k_1',k_2')}(a_1^2) \rho_{(k_1',k_2')}(a_2) \\
 \times \int_{(-1)} \int_{(7/4)} \tilde{\tilde{h}}(1-s/2,u+s) \left( \frac{a_1^2 a_2 N'}{\sqrt{k_1'^2+k_2'^2}} \right)^s \frac{ \Gamma(s/2)}{\pi^{s}\cdot \Gamma(1-s/2)} \cX_{N'}(1/2 + u, s, a_1,a_2,k_1',k_2')\, ds\, du. 
\end{gather*}
We proceed to bound this. 
Using \eqref{DoubleMellinbounds} and \eqref{L_func_ineq}, together with the bound $\rho(n)\leq n\tau(n)$ (Lemma \ref{Properties of rho}) and the fact that $\tau(n)$ is submultiplicative, we get the bound
\begin{gather*}
 \ll_{N'} X^{1/8} U^{3/4} \sum_{\substack{(a_1,2NN')=1\\a_1\leq Y}}\tau(a_1)^{O(1)} a_1^{3/2} \sum_{\substack{a_2|N \\ (a_2,N')=1}} \tau(a_2)^{O(1)}a_2^{3/4} \int_{(-1)} \int_{(3/2)} \frac{|\Gamma(s/2)|}{|\Gamma(1-s/2)|} \\
 \times \sum_{\substack{(k_1',k_2')\not=(0,0) \\ p | \gcd(k_1' ,k_2') \implies p|2a_1a_2 \\ k_1'^2+k_2'^2 \leq W}} \frac{\tau(k_1'^2+k_2'^2)^{O(1)}L(\frac{1}{2}, f \otimes \chi_{(k_1',k_2')}) }{(k_1'^2+k_2'^2)^{7/8}} \frac{\, ds\, du}{|u|(1+|s|)^{98}(1+|u|)^{98}} .
\end{gather*}
We want to bound the sum inside the integral first. To do this, note that for $k_1'^2+k_2'^2=m^2d$ where $d$ is square-free, we have $\chi_{(k_1',k_2')}(n) = \chi_d(n)$ for $(n,m)=1$. Hence we get, by using \eqref{L_func_ineq} again, that
\begin{gather*}
\sum_{\substack{(k_1',k_2')\not=(0,0) \\ p | \gcd(k_1' ,k_2') \implies p|2a_1a_2 \\ k_1'^2+k_2'^2 \leq W}} \frac{\tau(k_1'^2+k_2'^2)^{O(1)}  L(\frac{1}{2} + it, f\otimes \chi_{(k_1',k_2')}) }{(k_1'^2+k_2'^2)^{7/8}} \\
\ll \sum_{m \leq \sqrt{W}} \frac{\tau(m)^{O(1)}}{m^{7/4}} \sum_{d\leq W/m^2} r(m^2d) \tau(m^2 d)^{O(1)} \frac{|L(\frac{1}{2} + it, f \otimes \chi_d)|}{d^{7/8}}.
\end{gather*}
Now, using the submultiplicativity of $\tau(n)$ and the property that $r(m^2d)\leq r(m^2)r(d) \leq 4 \tau(m)^2 r(d)$, we obtain that the latter is bounded by
$$ \ll \sum_{m \leq \sqrt{W}} \frac{\tau(m)^{O(1)}}{m^{7/4}} \sum_{d\leq W/m^2} r(d) \tau(d)^{O(1)} \frac{|L(\frac{1}{2} + it, f \otimes \chi_d)|}{d^{7/8}}. $$
We can now do a dyadic decomposition for the inner sum and then perform Cauchy-Schwarz. After this, we are left to use the work of Li \cite{XLi}, i.e. Lemma \ref{XLi_bound}, to bound the second moment of the twisted $L$-function of $f$.
Putting all of the above together, the sum over $(k_1',k_2')$ finally gives
$$ \ll_{N} W^{1/8} (\log X)^{O(1)}.$$

Hence, at the end, we get the bound
$$ \ll_{N} X^{1/8} U^{3/4} W^{1/8} Y^{5/2} (\log X)^{O(1)}. $$
By a similar calculation in the range $k_1'^2+k_2'^2 > W$, we get
$$\ll_{N} \frac{U^{3/2} Y^4 (\log X)^{O(1)}}{X^{1/4}W^{1/4}} .$$
Using the definitions of $U=\frac{X}{(\log X)^A}, Y=(\log X)^{A/10}$, and $W=U^2X^{-1}Y^4$, the off-diagonal terms give an error of size $$\ll_{N } U \cdot  Y^3 (\log X)^{O(1)} \ll_{N,A} \frac{X}{(\log X)^{A/100}}. $$

\subsection{The $\cB_U$ sum}
We want to estimate

\[\sum_{\substack{d\, SF\\(d,2N)=1}}r(d)\cB_U(f\otimes\chi_{8d})\psi\left(\frac{8d}{X}\right). \]
We bound this using Cauchy-Schwarz, and then employing \cite[Proposition 4.3]{zhou}\footnote{Note that Proposition 4.3 of \cite{zhou} also covers the case $f=g$ in the notation there. This is similar to the work of Li \cite{XLi} where he obtained the same result but for the $L$-functions instead of their derivatives, see \cite[Proposition 6.1]{XLi}.}, together with
\[\sum_{d\leq x}r(d)^2\asymp x \log x,\]
to get that the sum is
\[\ll X\sqrt{\log X}(\log\log X)^3.\]

\nocite{*}

\bibliographystyle{amsplain}

\bibliography{Ref}

\end{document}